\newcommand{\tpmod}[1]{{\@displayfalse\pmod{#1}}}
\newtheorem{thm}{Theorem}[section]
\newtheorem{cor}[thm]{Corollary}
\theoremstyle{remark}
\theoremstyle{definition}
\theoremstyle{THM}
\newcommand{\abs}[1]{\left|{#1}\right|}
\def\FF {{\mathcal F}}
\def\Z {{\mathbb Z}}
\def\Q {{\mathbb Q}}
\def\D {{\mathcal D}}
\def\F {{\mathbb F}}
\def\D {{\mathcal D}}
\def\Z {{\mathbb Z}}
\def\Q {{\mathbb Q}}
\def\Gal{{\mbox{{\rm{Gal}}}}}
\def\red#1 {\textcolor{red}{#1 }}
\def\blue#1 {\textcolor{blue}{#1 }}
\numberwithin{equation}{section}
\def\Z {{\mathbb Z}}
\newcommand{\Mod}[1]{\ (\mathrm{mod}\enspace #1)}
\newcommand{\mmod}[1]{\ \mathrm{mod}\enspace #1}
\begin{document}

\title[Monogenic sextic trinomials]{Monogenic sextic trinomials $x^6+Ax^3+B$ and their Galois groups}

\author{Joshua Harrington}
\address{Department of Mathematics, Cedar Crest College, Allentown, Pennsylvania, USA}
\email[Joshua Harrington]{Joshua.Harrington@cedarcrest.edu}

\author{Lenny Jones}
\address{Professor Emeritus, Department of Mathematics, Shippensburg University, Shippensburg, Pennsylvania 17257, USA}
\email[Lenny~Jones]{doctorlennyjones@gmail.com}

%\author{Daniel White}
%\address{Department of Mathematics, Bryn Mawr College, Bryn Mawr, Pennsylvania 19010-2899, USA}
%\email[Daniel~White]{dfwhite@brynmawr.edu}
\date{\today}

\begin{abstract}
    Let $f(x)=x^6+Ax^3+B\in {\mathbb Z}[x]$, with $A\ne 0$, and suppose that $f(x)$ is irreducible over ${\mathbb Q}$. We define $f(x)$ to be {\em monogenic} if $\{1,\theta,\theta^2,\theta^3,\theta^4,\theta^{5}\}$ is a basis for the ring of integers of ${\mathbb Q}(\theta)$, where $f(\theta)=0$.
    
    For each possible Galois group $G$ of $f(x)$ over ${\mathbb Q}$, we use a theorem of Jakhar, Khanduja and Sangwan to give explicit descriptions of all monogenic trinomials $f(x)$ having Galois group $G$. We also investigate when these trinomials generate distinct sextic fields.
\end{abstract}

\subjclass[2020]{Primary 11R09, 11R04; Secondary 11R32, 11R21}
\keywords{monogenic, power-compositional, sextic, trinomial, Galois}

\maketitle
\section{Introduction}\label{Section:Intro}
 Let 
 \begin{equation}\label{Eq:f}
 f(x)=x^6+Ax^3+B\in {\mathbb Z}[x], \quad \mbox{where $AB\ne 0$}.
 \end{equation} If $f(x)$ is irreducible over ${\mathbb Q}$, then we define $f(x)$ to be {\em monogenic} if \[{\mathcal B}=\{1,\theta,\theta^2,\theta^3,\theta^4,\theta^{5}\}\] is a basis %, called a {\em power basis}, 
 for the ring of integers of ${\mathbb Q}(\theta)$, where $f(\theta)=0$. Such a basis ${\mathcal B}$ is sometimes referred to in the literature as a {\em power basis}. We say that two such monogenic sextic trinomials $f_1(x)\ne f_2(x)$ having Galois group $G$ are {\em distinct} if $K_1\ne K_2$, where $K_i=\Q(\theta_i)$ with $f_i(\theta_i)=0$.

 The purpose of this article is to characterize the monogenic sextic trinomials $f(x)$ described in \eqref{Eq:f} in terms of their Galois groups. That is, for each possible Galois group $\Gal(f)$ of $f(x)$ over ${\mathbb Q}$, we use a theorem of Jakhar, Khanduja and Sangwan \cite{JKS1} to derive conditions that allow us to provide an explicit description of all such monogenic sextic trinomials $f(x)$ having Galois group $\Gal(f)$. %For most Galois groups, this description involves infinite single-parameter families of trinomials. However, we show that one case has none, most of which are infinite and single-parameter.  and With a single exceptional Galois group, each of these families involves at most a single parameter. %, except in the case of $\Gal(f)\simeq S_3\times S_3$, where each family requires two parameters. 
  We also investigate when these trinomials are distinct.
   
   We point out that similar research involving the monogenicity and/or Galois groups of trinomials of various degrees has been conducted by many authors  \cite{AJ,AL,BS,HJAA,HJBAMS,JonesQuarticsBAMS,JonesRam,JonesNYJM,JonesJAA,JonesAA,JonesRecipQuartics,JonesEvenSextics,JW,MNSU,S1,S2,Voutier}. In particular, the results in this article extend previous work of the second author on sextic trinomials \cite{JonesRam,JonesNYJM,JonesEvenSextics}.
 
 We make some notational remarks. Throughout this paper, for integers $m$ and $n$ with $m\ge 2$,  we let $n \mmod{m}$ denote the unique integer $z\in \{0,1,2,\ldots,m-1\}$ such that $n\equiv z \pmod{m}$. That is, $n \mmod{m}=z$. We let $C_n$ denote the cyclic group of order $n$, and $S_3$ denote the symmetric group of order $6$. At certain times, it will be convenient to let $\delta=A^2-4B$. % in reference to $f(x)=x^6+Ax^3+B$. 
 We also let $\nu_p(n)$ denote the $p$-adic valuation of the integer $n$ for any prime $p$.

Using the groups $C_n$ and $S_3$, the familiar names of the possible Galois groups $\Gal(f)$ \cite{AJ,Cav,HJMS} are provided in Table \ref{T1}. For the convenience of the reader, we also include the ``T"-notation \cite{BM} for these groups, which is given in Maple when computing the Galois group. 
    \begin{table}[h]
 \begin{center}
\begin{tabular}{c|ccccc}
  $\Gal(f)$ & $C_6$ & $S_3$ & $C_2\times S_3$ & $C_3\times S_3$ & $S_3\times S_3$\\ [.25em]   
 T-notation & 6T1 & 6T2 & 6T3 & 6T5 & 6T9\\ [.25em]%  \hline \\ 
 \end{tabular}
\end{center}
\caption{Possible Galois groups for $f(x)=x^6+Ax^3+B$}
 \label{T1}
\end{table}

 Our main theorem is:
\begin{thm}\label{Thm:Main3}\text{}
Let $f(x)=x^6+Ax^3+B\in \Z[x]$  be irreducible over $\Q$, with $AB\ne 0$. Then
$f(x)$ is monogenic with $\Gal(f)\simeq$
\begin{enumerate}
  \item \label{M3:I1} $C_6$ if and only if $f(x)\in \FF_1:=\{x^6-x^3+1,\ x^6+x^3+1\}$;
  \item \label{M3:I1.5} $S_3$ never occurs;
 \item \label{M3:I2} $C_2\times S_3$ if and only if $f(x)\in \FF_i$ for some  $i\in \{2,3,4\}$, where
 \begin{enumerate}
   \item $\FF_2:=\{x^6-2x^3+2,\ x^6+2x^3+2\}$,
   \item $\FF_3:=\{x^6+Ax^3+1: A \mmod{9}\ne 0,\ A\ne \pm 1,$\\
    $\hspace*{1in} \mbox{with} \ A-2 \ \mbox{and}\ A+2 \ \mbox{squarefree}\}$,
   \item $\FF_4:=\left\{x^6+Ax^3-1: A \mmod{4}\ne 0, \ A \mmod{9}\not \in \{0,4,5\},\right.$\\
   $\hspace*{1in}\left. \mbox{with} \ (A^2+4)/\gcd(A^2+4,4) \ \mbox{squarefree} \right\}$;
    \end{enumerate}
  \item \label{M3:I3} $C_3\times S_3$ if and only if $f(x)\in \FF_5$, where
    \begin{align*}
  \FF_5:=&\{x^6+Ax^3+(A^2+3)/4: A \mmod{2}=1,\ A\ne \pm 1, \\
  &\hspace*{1.5in} \mbox{with} \ (A^2+3)/4 \ \mbox{squarefree}\};
  \end{align*}
   \item \label{M3:I4} $S_3\times S_3$ if and only if $f(x)$ is contained in one of $144$ infinite pairwise-disjoint $2$-parameter monogenic families. 
   \end{enumerate} Moreover, $\FF_i\cap \FF_j=\varnothing$ for $i\ne j$, and choosing all trinomials with positive coefficient on $x^3$ in each $\FF_i$  
  yields a complete set of distinct monogenic trinomials for the corresponding respective Galois groups. 
\end{thm}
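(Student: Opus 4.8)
The plan is to treat Theorem~\ref{Thm:Main3} as five parallel characterization problems, one per Galois group, and to run the same three-step machine in each case: first determine when $f(x)$ has the prescribed Galois group, second impose monogenicity via the Jakhar--Khanduja--Sangwan criterion \cite{JKS1}, and third reconcile the two sets of conditions. The group-theoretic input is the classification in Table~\ref{T1}, together with the standard resolvent structure of $x^6+Ax^3+B$: writing $y=x^3$ shows that the splitting field sits over the quadratic $\Q(\sqrt{\delta})$ with $\delta=A^2-4B$, while the cubic behaviour is governed by whether $B$ (equivalently the product of the two roots of $y^2+Ay+B$) is a cube and by the discriminant data of the associated cubic/sextic resolvents. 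I would first record, for each possible $\Gal(f)$, the precise arithmetic conditions on $(A,B)$ that force that group; this is where the $C_6$ versus $S_3$ versus the three larger groups split, and in particular where the claim that $S_3$ \emph{never} occurs (part~\eqref{M3:I1.5}) must be extracted --- the point being that the conditions forcing $\Gal(f)\simeq S_3$ are incompatible with irreducibility together with monogenicity.

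The second step is to compute $\operatorname{disc}(f)$ explicitly as a polynomial in $A,B$ and to apply the JKS criterion prime-by-prime. For a trinomial $x^6+Ax^3+B$ the discriminant has a clean closed form, and JKS reduces monogenicity to checking, for each prime $p$ dividing $\operatorname{disc}(f)$, an explicit congruence condition involving $p \mid A$, $p \mid B$, and the $p$-adic valuations $\nu_p$. I would organize this by the prime $2$, the prime $3$, and the ``generic'' odd primes $p\ge 5$, because the small-prime obstructions are exactly what produce the side conditions visible in the statement: the $A \bmod 9$ exclusions in $\FF_3,\FF_4$, the $A \bmod 4$ and $A\bmod 9$ exclusions in $\FF_4$, the squarefreeness of $A\pm2$, of $(A^2+4)/\gcd(A^2+4,4)$, and of $(A^2+3)/4$. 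Carrying this out cross-referenced against the group conditions from step one yields the five families $\FF_1,\dots,\FF_5$ and the $144$ two-parameter families for $S_3\times S_3$; the bookkeeping for that last group, where both the quadratic and the cubic pieces are non-trivial, is the most laborious case and is presumably where the count $144$ arises from a product of finitely many residue classes.

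For the concluding ``Moreover'' assertions I would argue separately that the families are pairwise disjoint and that positive representatives are distinct. Disjointness $\FF_i\cap\FF_j=\varnothing$ is immediate once each family is tagged by its Galois group, since a single irreducible $f$ has a unique $\Gal(f)$; within the $S_3\times S_3$ case the $144$ families are declared pairwise-disjoint by construction of their defining residue constraints. For distinctness of fields, I would use the transformation $x\mapsto -x$, which sends $x^6+Ax^3+B$ to $x^6-Ax^3+B$ and hence gives an isomorphism $\Q(\theta)\cong\Q(-\theta)$ of the generated fields; this identifies the trinomial with coefficient $+A$ and the one with $-A$ on $x^3$, so choosing the positive sign picks exactly one representative per field. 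The remaining content is to verify that two trinomials with \emph{distinct} positive data $(A,B)$ in the same family generate genuinely different sextic fields --- this I would handle by comparing a field invariant that is recoverable from $(A,B)$, such as the discriminant of $\Q(\theta)$ (which equals $\operatorname{disc}(f)$ under monogenicity) or the quadratic subfield $\Q(\sqrt{\delta})$, so that equal fields force equal invariants and hence equal parameters. The hard part throughout is step two: the prime-by-prime JKS analysis at $p=2$ and $p=3$, where the generic squarefreeness condition degenerates and must be replaced by the explicit modular exclusions that account for nearly every non-obvious clause in the statement.
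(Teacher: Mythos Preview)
Your three-step framework matches the paper's approach exactly: Galois-group conditions via the resolvent $R(x)=x^3-3Bx+AB$ and the square/non-square status of $-3\delta$ (the paper's Theorem~\ref{Thm:HJCSextic1}), then the JKS criterion specialized to $x^6+Ax^3+B$ and organized by the primes $2$, $3$, and $p\ge 5$ (the paper's Theorem~\ref{Thm:m=3}), then reconciliation. The forward direction --- showing each $f\in\FF_i$ is monogenic with the stated group --- does go through as you describe.

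The genuine gap is in the converses, which you treat as routine ``reconciliation'' but which carry most of the weight. Take $C_2\times S_3$: given a monogenic $f$ with this group and $|B|\ge 2$, you must prove $f\in\FF_2$, i.e.\ force $(A,B)=(\pm 2,2)$. The JKS conditions tell you $B$ and $\Gamma=(A^2-4B)/2^{\nu_2}3^{\nu_3}$ are squarefree, but that alone is far from pinning down $B$. The paper's key move is that since $B$ squarefree and $|B|\ge2$ rule out $B=c^3$, Theorem~\ref{Thm:HJCSextic1} forces $R(x)$ to be \emph{reducible}; writing $R(x)=(x-r)(x^2+rx+r^2-3B)$ and computing $\Delta(R)$ two ways gives the identity $A^2-4B=(r^2-4B)(r^2/B-1)^2$. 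Squarefreeness of $\Gamma$ then forces $|r^2/B-1|=2^a3^b$, and a delicate mod-$9$ and mod-$16$ analysis (playing the monogenicity conditions against this) drives $a\le 1$, $b=0$, and finally $(A,B)=(\pm2,2)$. Your plan does not anticipate this discriminant-comparison trick, and without it there is no mechanism to bound $|B|$. A parallel issue arises for $C_3\times S_3$: the constraint $-3\delta=\square$ combined with $\Gamma$ squarefree gives $-3(A^2-4B)=3^{2b}$, and one must then argue $b=1$ via the mod-$9$ table in condition~\eqref{m=3:C4}; again this is not mere bookkeeping.

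A smaller point on distinctness: your claim that ``equal fields force equal invariants and hence equal parameters'' via $\Delta(f)$ is the right idea, but $\Delta(f_1)=\Delta(f_2)$ does not immediately give $A_1=A_2$; the paper has to factor $\Delta(f_1)-\Delta(f_2)$ explicitly and use positivity of $A_i$ to kill the spurious factors.
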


\section{Basic Preliminaries}\label{Section:Prelim}
We begin with a theorem that provides conditions under which the possible Galois groups
of $f(x)=x^6+Ax^3+B$ from Table \ref{T1} can occur. 
\begin{thm}{\rm \cite{Cav,HJMS}}\label{Thm:HJCSextic1}
  Let $f(x)=x^6+Ax^3+B\in \Z[x]$  be irreducible over $\Q$, and let $R(x)=x^3-3Bx+AB$.
  Let $\delta=A^2-4B$. Then
  \[\Gal(f)\simeq \left\{\begin{array}{cl}
  C_6 & \mbox{if and only if $-3\delta$ is a square, $R(x)$ is irreducible and}\\
  & \mbox{$B=c^3$ for some $c\in \Z$,}\\[.5em]
  %C_6 & \mbox{if and only if $-3\delta$ is a square and $B=c^3$ for some $c\in \Z$,}\\[.5em]
  S_3 & \mbox{if and only if $-3\delta$ is a square, $R(x)$ is reducible and}\\
  & \mbox{$B\ne c^3$ for any $c\in \Z$,}\\[.5em]
  C_2\times S_3 & \mbox{if and only if $-3\delta$ is not a square and}\\
  & \mbox{either $R(x)$ is reducible or $B=c^3$ for some $c\in \Z$,}\\[.5em]
  C_3\times S_3 & \mbox{if and only if $-3\delta$ is a square, $R(x)$ is irreducible and}\\
  & \mbox{$B\ne c^3$ for any $c\in \Z$,}\\[.5em]
  S_3\times S_3 & \mbox{if and only if $-3\delta$ is not a square, $R(x)$ is irreducible and}\\
  & \mbox{$B\ne c^3$ for any $c\in \Z$.}%\\[.5em]
  \end{array}\right.\]
\end{thm}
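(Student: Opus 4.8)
The plan is to compute $\Gal(f)$ directly as the Galois group of the splitting field $L$ of $f$, organizing everything around the two quadratic fields $\Q(\sqrt{\delta})$ and $\Q(\omega)$ (where $\omega$ is a primitive cube root of unity) together with the resolvent $R$. Writing $g(y)=y^2+Ay+B$, so that $f(x)=g(x^3)$, let $\alpha,\beta$ be the roots of $g$; then $\alpha\beta=B$, $\alpha+\beta=-A$, $\alpha-\beta=\sqrt{\delta}$, and the six roots of $f$ are the cube roots $\omega^j\sqrt[3]{\alpha}$ and $\omega^j\sqrt[3]{\beta}$, so that $L=\Q(\omega,\sqrt[3]{\alpha},\sqrt[3]{\beta})$. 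First I would record that irreducibility of $f$ forces $g$ to be irreducible (otherwise $f=(x^3-\alpha)(x^3-\beta)$ with $\alpha,\beta\in\Q$), so $\delta$ is never a perfect square; consequently $L$ always contains $\Q(\sqrt{\delta})$ as a genuine quadratic subfield, and $\Gal(f)$ always surjects onto $\Gal(\Q(\sqrt\delta)/\Q)\simeq C_2$.

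The backbone of the argument is the intermediate field $M=\Q(\omega,\sqrt{\delta})=\Q(\omega,\alpha)$. Its degree is the first dichotomy: since $\sqrt{\delta}\in\Q(\omega)=\Q(\sqrt{-3})$ exactly when $-3\delta$ is a square, one has $[M:\Q]=2$ when $-3\delta$ is a square and $[M:\Q]=4$ (with $\Gal(M/\Q)\simeq C_2\times C_2$) otherwise. Because $M\supseteq\Q(\omega)$, the top layer $L=M(\sqrt[3]{\alpha},\sqrt[3]{\beta})$ is a Kummer extension, so $\Gal(L/M)$ embeds in $C_3\times C_3$ and $[L:M]\in\{1,3,9\}$; irreducibility of $f$ rules out $[L:M]=1$ and forces $\sqrt[3]{\alpha}$ to have degree $3$ over $M$. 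Computing $[L:M]$ then reduces to counting independent cube relations $\alpha^a\beta^b\in M^{*3}$. The relation $\alpha\beta=B\in M^{*3}$ holds iff $B$ is a cube, and here I would note that $B\in M^{*3}\iff B\in\Q^{*3}$, since a degree-$2$ or degree-$4$ extension cannot contain a nonrational cube root of a rational number.

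To handle the remaining relation and bring in $R(x)=x^3-3Bx+AB$, I would compute its roots explicitly. Setting $u=\sqrt[3]{B\alpha}$ and $v=\sqrt[3]{B\beta}$ with $uv=B$, the three quantities $r_i=\omega^i u+\omega^{2i}v$ satisfy $\sum_i r_i=0$, $\sum_{i<j}r_ir_j=-3uv=-3B$, and $r_0r_1r_2=u^3+v^3=-AB$, so they are precisely the roots of $R$; in particular $\mathrm{disc}(R)=-27B^2\delta$, whence $-3\delta$ is a square iff $\mathrm{disc}(R)$ is a square iff $\Gal(R)\subseteq C_3$. This identification, since $\mathrm{Spl}(R)\subseteq L$, lets me translate reducibility of $R$ into a cube relation among $\alpha,\beta$ in $M^{*}$ and thereby prove the key equivalence $[L:M]=9\iff(R\text{ is irreducible and }B\text{ is not a cube})$, with $[L:M]=3$ in every other irreducible case. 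Combining with the two possible values of $[M:\Q]$ yields $|\Gal(f)|=[L:\Q]\in\{6,12,18,36\}$, matching the five rows of the classification.

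Finally I would pin down the isomorphism type within each order. For order $6$ (so $-3\delta$ a square and $M=\Q(\omega)$), I would analyze the action of complex conjugation, which sends $\omega\mapsto\omega^2$ and swaps $\alpha\leftrightarrow\beta$, on the normal subgroup $\Gal(L/\Q(\omega))\simeq C_3$: a trivial action gives $C_6$ and an inverting action gives $S_3$, and this is exactly detected by whether $\mathrm{Spl}(R)$ is a normal (cyclic) cubic inside $L$, i.e. by $R$ being irreducible versus split. The orders $12,18,36$ are identified as $C_2\times S_3$, $C_3\times S_3$, $S_3\times S_3$ by exhibiting the commuting $C_2$ factor coming from $\Q(\sqrt\delta)$ or $\Q(\omega)$ and the $S_3$ (respectively $C_3$) factors coming from the block action on the two sets of cube roots. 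I expect the main obstacle to be the key equivalence in the third paragraph: carefully separating the ``$B$ is a cube'' relation from the ``$\alpha/\beta$ is a cube'' relation inside $M^{*}/M^{*3}$, showing that the latter is precisely what reducibility of $R$ detects, and disposing of the single impossible Boolean combination ($-3\delta$ a square, $R$ reducible, $B$ a cube) by showing it would force $f$ to be reducible.
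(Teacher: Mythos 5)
You should first be aware that this is the one statement the paper does not prove at all: Theorem \ref{Thm:HJCSextic1} is imported as a known result from \cite{Cav} and \cite{HJMS}, so there is no internal proof to compare against. Your outline is, in substance, the standard argument from those sources, and its main lines are correct. Writing $f(x)=g(x^3)$ and $L=\Q(\omega,\sqrt[3]{\alpha},\sqrt[3]{\beta})$, the degree $[M:\Q]\in\{2,4\}$ is indeed governed by whether $-3\delta$ is a square, and $[L:M]\in\{3,9\}$ by Kummer theory over $M=\Q(\omega,\sqrt{\delta})$. The step you flag as the main obstacle does go through: irreducibility of $f$ kills the relations $\alpha\in M^{*3}$ and $\beta\in M^{*3}$, so the only possible nontrivial relations are $\alpha\beta=B\in M^{*3}$ (equivalent to $B\in\Q^{*3}$ because $3\nmid[M:\Q]$) and $B\alpha\in M^{*3}$; the latter is equivalent to $R$ having a rational root, since your $r_i=\omega^{i}u+\omega^{-i}v$ computation gives one direction, and conversely a rational root $r$ of $R$ forces $u,v$ into $\Q(\sqrt{r^2-4B})=\Q(\sqrt{\delta})\subseteq M$ via the identity $(u-v)(r^2-B)=\pm B\sqrt{\delta}$. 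Both relations holding at once would force $\alpha\in M^{*3}$, which disposes of the impossible Boolean combination, and the five orders $6,6,12,18,36$ follow.

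The one point where your sketch is not right as written is the identification of the direct factors at the end: the block stabilizers $\Gal(L/M(\sqrt[3]{\alpha}))$ and $\Gal(L/M(\sqrt[3]{\beta}))$ are \emph{not} normal in $G$ (an automorphism moving $\sqrt{\delta}$ interchanges the blocks), so the $S_3$ and $C_3$ direct factors cannot literally ``come from the block action on the two sets of cube roots.'' The clean repair is to use the two normal subfields $\Q(\omega,\sqrt[3]{B})$ and the splitting field of $R$, both of which lie in $L$: in the order-$36$ case each is an $S_3$-sextic and their compositum is all of $L$ of degree $36=6\cdot 6$, whence $G\simeq S_3\times S_3$; in the order-$18$ case the splitting field of $R$ is a cyclic cubic, and its existence is what rules out the generalized dihedral group of $C_3\times C_3$ (the involution acting by $-1$ on all of $\Gal(L/M)$), a possibility that nonabelianness alone does not exclude; the order-$12$ case needs no such care, since every nonsplit possibility for a nonabelian extension of $C_2\times C_2$ by a normal $C_3$ is already $S_3\times C_2$. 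With that adjustment your plan is a complete and correct proof.
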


The next theorem is due to the authors \cite[Theorem 3.1]{HJMS}.
\begin{thm}\label{Thm:m=3 HJMS}
  Let $g(x)=x^2+Ax+B\in \Z[x]$ be irreducible over $\Q$. Then
  \[f(x):=g(x^3)=x^6+Ax^3+B\] is reducible over $\Q$ if and only if $B=n^3$ and $A=m^3-3mn$ for some $m,n\in \Z$.
\end{thm}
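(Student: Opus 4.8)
The polynomial $f(x)=g(x^3)$ is power-compositional, so the natural tool is the behaviour of $x^3-\alpha$ over the field generated by a root of $g$. Write $\alpha,\beta$ for the two roots of $g$, and let $K=\Q(\alpha)=\Q(\beta)$; since $g$ is irreducible, $K$ is a quadratic field with nontrivial automorphism $\sigma$ interchanging $\alpha$ and $\beta$. The plan is to prove the equivalence via the single intermediate statement that $f$ is reducible over $\Q$ \emph{if and only if $\alpha$ is a cube in $K$}. For the harder ``only if'' half of this reduction I would argue by degrees: if $\theta$ is a root of a proper irreducible factor of $f$, then $\theta^3\in\{\alpha,\beta\}$ generates $K$, so $K\subseteq\Q(\theta)$ and $[\Q(\theta):\Q]=2[\Q(\theta):K]<6$ forces $[\Q(\theta):K]\le 2$; hence the cubic $x^3-\theta^3\in K[x]$ is reducible over $K$ and therefore has a root in $K$, producing $\gamma\in K$ with $\gamma^3=\theta^3\in\{\alpha,\beta\}$. (Alternatively one may simply quote Capelli's theorem, $3$ being an odd prime.) The ``if'' half is immediate from the explicit factorization below.

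For the forward implication, suppose $f$ is reducible, so by the reduction there is $\gamma\in K$ with $\gamma^3=\alpha$ (relabelling $\alpha,\beta$ if necessary). The crucial point is integrality: $\gamma$ is a root of the monic integer polynomial $f$, hence an algebraic integer lying in the quadratic field $K$, so $\gamma\in\mathcal{O}_K$ and both $\tr(\gamma)$ and $N(\gamma)$ are \emph{rational integers}. Applying $\sigma$ to $\gamma^3=\alpha$ and using $\sigma(\alpha)=\beta$ gives $\sigma(\gamma)^3=\beta$. Setting $n=N(\gamma)=\gamma\sigma(\gamma)$ and $m=-\tr(\gamma)=-(\gamma+\sigma(\gamma))$, both in $\Z$, I then read off
\[B=\alpha\beta=(\gamma\sigma(\gamma))^3=n^3\]
and, via the Newton identity $\gamma^3+\sigma(\gamma)^3=(\gamma+\sigma(\gamma))^3-3\gamma\sigma(\gamma)(\gamma+\sigma(\gamma))$,
\[-A=\alpha+\beta=\gamma^3+\sigma(\gamma)^3=(-m)^3-3n(-m)=-m^3+3mn,\]
so that $A=m^3-3mn$, as required.

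For the converse, assume $B=n^3$ and $A=m^3-3mn$ with $m,n\in\Z$. Then one verifies directly the identity
\[x^6+(m^3-3mn)x^3+n^3=(x^2+mx+n)\bigl(x^4-mx^3+(m^2-n)x^2-mnx+n^2\bigr)\]
by expanding the right-hand side and collecting coefficients; this exhibits $f$ as a product of two nonconstant polynomials in $\Z[x]$, so $f$ is reducible over $\Q$. Note that this half uses nothing about the irreducibility of $g$, and it also supplies a concrete factorization rather than merely an existence statement.

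The main obstacle is the forward direction, and within it the integrality step: the degree computation only produces $\gamma\in K$ a priori with rational trace and norm, and one must invoke that $\gamma$ is an algebraic integer to conclude $m,n\in\Z$ rather than merely $m,n\in\Q$. The remaining bookkeeping---tracking the conjugation $\sigma$ so that $\beta=\sigma(\gamma)^3$ and applying the cube-sum Newton identity---is routine, as is the verification of the displayed factorization.
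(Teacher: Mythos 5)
Your argument is correct. Note that the paper does not actually prove this theorem: it is quoted from the authors' earlier work \cite[Theorem~3.1]{HJMS}, so there is no in-paper proof to compare against. On its own merits, your proof is sound: the reduction of reducibility of $g(x^3)$ to ``$\alpha$ is a cube in $K=\Q(\alpha)$'' via the degree count $[\Q(\theta):\Q]=2[\Q(\theta):K]<6$ (or Capelli) is valid, the observation that $f(\gamma)=g(\gamma^3)=g(\alpha)=0$ makes $\gamma$ an algebraic integer of the quadratic field $K$ and hence gives $m=-\tr(\gamma)$ and $n=N(\gamma)$ in $\Z$, the identity $\gamma^3+\sigma(\gamma)^3=(\gamma+\sigma(\gamma))^3-3\gamma\sigma(\gamma)(\gamma+\sigma(\gamma))$ yields $A=m^3-3mn$ and $B=n^3$, and the displayed factorization $(x^2+mx+n)\bigl(x^4-mx^3+(m^2-n)x^2-mnx+n^2\bigr)$ checks out for the converse. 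This is essentially the standard route one would expect the cited reference to take.
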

The following immediate corollary of Theorem \ref{Thm:m=3 HJMS} will be useful in the proof of Theorem \ref{Thm:Main3}.
\begin{cor}\label{Cor:m=3 HJMS}
 Let $g(x)=x^2+Ax+B\in \Z[x]$ be irreducible over $\Q$, such that $B$ is squarefree with $\abs{B}\ne 1$. Then \[f(x):=g(x^3)=x^6+Ax^3+B\] is irreducible over $\Q$. 
\end{cor}

The formula for the discriminant of an arbitrary trinomial is given in the next theorem.
\begin{thm}
\label{Thm:Swan}
{\rm \cite{Swan}}
Let $f(x)=x^n+Ax^m+B\in \Z[x]$, where $0<m<n$, and let $d=\gcd(n,m)$. Then
\[
\Delta(f)=(-1)^{n(n-1)/2}B^{m-1}\left(n^{n/d}B^{(n-m)/d}-(-1)^{n/d}(n-m)^{(n-m)/d}m^{m/d}A^{n/d}\right)^d.
\]
\end{thm}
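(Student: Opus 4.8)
The plan is to compute the discriminant through the resultant of $f$ with its derivative. Since $f$ is monic of degree $n$, we have $\Delta(f)=(-1)^{n(n-1)/2}\operatorname{Res}(f,f')$, and the whole difficulty is to evaluate $\operatorname{Res}(f,f')$ in closed form. The crucial observation is that $f'(x)=nx^{n-1}+Amx^{m-1}=x^{m-1}\bigl(nx^{n-m}+Am\bigr)$ factors neatly, so it is far more efficient to expand the resultant over the roots of $f'$ than over those of $f$. Using $\operatorname{Res}(f,f')=(-1)^{n(n-1)}\operatorname{Res}(f',f)$ together with $\operatorname{Res}(f',f)=n^{n}\prod_{f'(\beta)=0}f(\beta)$ (the leading coefficient of $f'$ being $n$ and $\deg f=n$), the problem reduces to evaluating $f$ at the roots of $f'$, counted with multiplicity.

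These roots split into two groups. The factor $x^{m-1}$ contributes the root $\beta=0$ with multiplicity $m-1$, and since $f(0)=B$ this produces a clean factor $B^{m-1}$. The remaining $n-m$ roots are exactly the solutions of $\beta^{n-m}=-Am/n$. For such a $\beta$ I would substitute $\beta^{n}=\beta^{m}\cdot\beta^{n-m}=-\tfrac{Am}{n}\beta^{m}$ into $f(\beta)=\beta^{n}+A\beta^{m}+B$, collapsing it to the expression $f(\beta)=\tfrac{A(n-m)}{n}\beta^{m}+B$, linear in $\beta^{m}$. Writing $c=A(n-m)/n$, the task becomes the evaluation of $\prod_{\beta^{n-m}=-Am/n}\bigl(c\beta^{m}+B\bigr)$.

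The heart of the argument, and the step I expect to be the main obstacle, is this last product, where the exponent $m$ clashes with the constraint exponent $n-m$; reconciling them is precisely where $d=\gcd(n,m)=\gcd(n-m,m)$ enters. Parametrising the roots as $\beta_{j}=\beta_{0}\zeta^{j}$ with $\zeta$ a primitive $(n-m)$-th root of unity, one has $c\beta_{j}^{m}=u\zeta^{jm}$ for $u=c\beta_{0}^{m}$, and because $\gcd(m,n-m)=d$ the quantity $\zeta^{jm}$ runs over the $((n-m)/d)$-th roots of unity, each attained exactly $d$ times as $j$ ranges over $0,\dots,n-m-1$. Hence the product is a perfect $d$-th power $\bigl(\prod_{\eta^{N}=1}(B+u\eta)\bigr)^{d}$ with $N=(n-m)/d$, and the elementary cyclotomic identity $\prod_{\eta^{N}=1}(B+u\eta)=B^{N}-(-1)^{N}u^{N}$ evaluates the inner product. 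A short check shows $u^{N}=c^{N}(-Am/n)^{m/d}$ is a genuine rational function of $A,B,m,n$, the apparent dependence on the choice of $\beta_{0}$ disappearing because $\zeta^{m(n-m)/d}=1$.

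It then remains to simplify and reassemble. Using the key exponent identity $(n-m)/d+m/d=n/d$, I would show
\[
u^{N}=(-1)^{m/d}\,\frac{A^{n/d}(n-m)^{(n-m)/d}m^{m/d}}{n^{n/d}},
\]
so that, combining signs via $(-1)^{N}(-1)^{m/d}=(-1)^{n/d}$, one gets $B^{N}-(-1)^{N}u^{N}=B^{(n-m)/d}-(-1)^{n/d}A^{n/d}(n-m)^{(n-m)/d}m^{m/d}/n^{n/d}$. Collecting the factors gives $\operatorname{Res}(f,f')=n^{n}B^{m-1}\bigl(B^{N}-(-1)^{N}u^{N}\bigr)^{d}$ (the sign $(-1)^{n(n-1)}$ being trivial), and finally absorbing $n^{n}=(n^{n/d})^{d}$ into the $d$-th power clears the denominator $n^{n/d}$ and yields exactly
\[
\Delta(f)=(-1)^{n(n-1)/2}B^{m-1}\Bigl(n^{n/d}B^{(n-m)/d}-(-1)^{n/d}(n-m)^{(n-m)/d}m^{m/d}A^{n/d}\Bigr)^{d},
\]
as claimed. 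The only real care needed throughout is the bookkeeping of the signs and the $d$-th-power structure; everything else is a direct computation.
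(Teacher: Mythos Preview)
Your argument is correct. The paper itself does not prove this statement: it is quoted from Swan's 1962 paper and used as a black box, so there is no ``paper's own proof'' to compare against. Your route via $\Delta(f)=(-1)^{n(n-1)/2}\operatorname{Res}(f,f')$, factoring $f'(x)=x^{m-1}(nx^{n-m}+Am)$, evaluating $f$ at the roots of $f'$, and then exploiting $\gcd(m,n-m)=d$ to collapse the product $\prod_{\beta^{n-m}=-Am/n}\bigl(B+c\beta^{m}\bigr)$ into a $d$-th power of a cyclotomic product, is the standard resultant computation and is essentially how Swan himself derives the formula. The sign bookkeeping and the verification that $u^{N}$ is independent of the choice of $\beta_{0}$ are handled correctly; the only tacit assumption is that $A\neq 0$ so that the $n-m$ roots of $nx^{n-m}+Am$ are distinct and nonzero, but since both sides of the claimed identity are polynomials in $A$ and $B$, the generic case suffices.
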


We now present some basic information concerning the monogenicity of a polynomial.
Suppose that $f(x)\in \Z[x]$ is monic and irreducible over $\Q$.
Let $K=\Q(\theta)$ with ring of integers $\Z_K$, where $f(\theta)=0$. Then, we have \cite{Cohen}
\begin{equation} \label{Eq:Dis-Dis}
\Delta(f)=\left[\Z_K:\Z[\theta]\right]^2\Delta(K),
\end{equation}
where $\Delta(f)$ and $\Delta(K)$ denote the discriminants over $\Q$, respectively, of $f(x)$ and the number field $K$.
Thus, we have the following theorem from \eqref{Eq:Dis-Dis}.
\begin{thm}\label{Thm:mono}
The polynomial $f(x)$ is monogenic if and only if
  $\Delta(f)=\Delta(K)$, or equivalently, $\Z_K=\Z[\theta]$.
\end{thm}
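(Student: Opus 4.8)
\textbf{Proof proposal for Theorem \ref{Thm:mono}.}

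The plan is to deduce the stated equivalences directly from the index formula \eqref{Eq:Dis-Dis}, $\Delta(f)=\left[\Z_K:\Z[\theta]\right]^2\Delta(K)$, together with the definition of monogenicity. First I would recall that, by hypothesis, $f(x)$ is monic and irreducible over $\Q$ of degree $n=\deg f$, so $\theta$ is an algebraic integer and $\Z[\theta]$ is a free $\Z$-module of rank $n$ sitting inside $\Z_K$, which is also free of rank $n$. Consequently the index $\left[\Z_K:\Z[\theta]\right]$ is a well-defined positive integer; in particular it is nonzero, so $\Delta(K)\ne 0$ and we may legitimately compare the two sides of \eqref{Eq:Dis-Dis}.

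Next I would establish the equivalence $\Delta(f)=\Delta(K)\iff \Z_K=\Z[\theta]$. Since $\Z[\theta]\subseteq \Z_K$ are both free of the same finite rank, the index $m:=\left[\Z_K:\Z[\theta]\right]$ satisfies $m=1$ if and only if $\Z[\theta]=\Z_K$. Substituting into \eqref{Eq:Dis-Dis} gives $\Delta(f)=m^2\Delta(K)$. Because $\Delta(K)\ne 0$ and $m$ is a positive integer, the equality $\Delta(f)=\Delta(K)$ holds precisely when $m^2=1$, i.e.\ when $m=1$, which is exactly the condition $\Z_K=\Z[\theta]$. This settles the second ``or equivalently'' clause of the theorem.

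Finally I would connect $\Z_K=\Z[\theta]$ to the definition of monogenicity as given in the Introduction. By definition, $f$ is monogenic precisely when $\{1,\theta,\dots,\theta^{n-1}\}$ is a $\Z$-basis for $\Z_K$; but $\{1,\theta,\dots,\theta^{n-1}\}$ is always a $\Z$-basis for $\Z[\theta]$, so this power basis spans $\Z_K$ over $\Z$ if and only if $\Z[\theta]=\Z_K$. Chaining the two equivalences then yields
\[
f \text{ monogenic}\iff \Z_K=\Z[\theta]\iff \Delta(f)=\Delta(K),
\]
as claimed. The argument is essentially bookkeeping with the index formula; the only point requiring care is the observation that $\Delta(K)\ne 0$ so that the squared index is forced to equal $1$ by the discriminant equality, rather than merely being consistent with it. Since all the structural facts (freeness of both modules, nonvanishing of $\Delta(K)$, and formula \eqref{Eq:Dis-Dis}) are standard and are quoted from \cite{Cohen}, there is no substantive obstacle; the proof is immediate.
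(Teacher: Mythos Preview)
Your proof is correct and follows exactly the approach the paper indicates: the paper does not give a separate proof of Theorem~\ref{Thm:mono} but simply states that it follows from \eqref{Eq:Dis-Dis}, and your argument spells out precisely this deduction. The only difference is that you supply the routine details (freeness of $\Z[\theta]$ and $\Z_K$, $\Delta(K)\ne 0$, and the connection between the power basis and $\Z_K=\Z[\theta]$) that the paper leaves implicit.
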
 We then have the following immediate corollary.
\begin{cor}\label{Cor:distinct}
  Let $f_1(x)\ne f_2(x)$ be two monogenic polynomials such that $\deg(f_1)=\deg(f_2)$. Let $K_i=\Q(\theta_i)$, where $f_i(\theta_i)=0$. If $\Delta(f_1)\ne \Delta(f_2)$, then %$K_1\ne K_2$, and in this situation, we say that 
  $f_1(x)$ and $f_2(x)$ are distinct.
\end{cor}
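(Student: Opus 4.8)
The plan is to argue by contraposition, reducing the claim to the elementary fact that the discriminant of a number field is an invariant of the field itself rather than of any chosen generator. First I would invoke Theorem~\ref{Thm:mono}: since each $f_i(x)$ is monogenic, we have $\Delta(f_i)=\Delta(K_i)$, where $\Delta(K_i)$ denotes the discriminant of the number field $K_i=\Q(\theta_i)$.

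Next I would use that $\Delta(K)$ depends only on the field $K$. Thus if $K_1=K_2$, then $\Delta(K_1)=\Delta(K_2)$, and combining this with the previous step yields $\Delta(f_1)=\Delta(K_1)=\Delta(K_2)=\Delta(f_2)$. This contradicts the hypothesis $\Delta(f_1)\ne \Delta(f_2)$, so we must have $K_1\ne K_2$, which is precisely the assertion that $f_1(x)$ and $f_2(x)$ are distinct.

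There is no genuine obstacle here: the corollary is essentially Theorem~\ref{Thm:mono} packaged together with the well-definedness of the field discriminant. The only point meriting a word of care is that $\Delta(K)$ is a field invariant, so that equality of the fields (and not merely an abstract isomorphism between them) forces equality of the discriminants. The degree hypothesis $\deg(f_1)=\deg(f_2)$ plays no role in this discriminant argument; indeed, unequal degrees would force $K_1\ne K_2$ outright, since monogenicity entails irreducibility and hence $[K_i:\Q]=\deg(f_i)$. The hypothesis simply fixes attention on the nontrivial situation in which the two fields could a priori coincide.
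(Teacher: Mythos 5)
Your argument is correct and is exactly the reasoning the paper intends: the corollary is stated as an immediate consequence of Theorem~\ref{Thm:mono} (monogenicity gives $\Delta(f_i)=\Delta(K_i)$, and the field discriminant is an invariant of the field, so $K_1=K_2$ would force $\Delta(f_1)=\Delta(f_2)$). The paper gives no further proof, and your contrapositive write-up, including the remark about the degree hypothesis, matches the intended justification.
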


The next theorem, due to Jakhar, Khanduja and Sangwan \cite{JKS1}, gives necessary and sufficient conditions for an irreducible trinomial $f(x)=x^{n}+Ax^m+B\in \Z[x]$, where $m\ge 1$ is a proper divisor of $n$, to be monogenic. Although the same authors have also proven a version of this theorem that addresses arbitrary irreducible trinomials \cite{JKS2}, we do not require that more general version here. %Note that, for a prime $p$ and integer $M$, we use
%the notation $p^N\mid \mid M$ to mean that $p^N$ is the exact power of $p$ that divides $M$.
\begin{thm}\label{Thm:JKS1}
Let $n\ge 2$ be an integer.
Let $K=\Q(\theta)$ be an algebraic number field with $\theta\in \Z_K$, the ring of integers of $K$, having minimal polynomial $f(x)=x^{n}+Ax^m+B$ over $\Q$, where $m\ge 1$ is a proper divisor of $n$. Let $t=n/m$. A prime factor $p$ of $\Delta(f)$ does not divide $\left[\Z_K:\Z[\theta]\right]$ if and only if $p$ satisfies one of the following conditions:
\begin{enumerate}
  \item \label{JKS:C1Main} when $p\mid A$ and $p\mid B$, then $p^2\nmid B$;
  \item \label{JKS:C2Main} when $p\mid A$ and $p\nmid B$, then
  \[\mbox{either } \quad p\mid a_2 \mbox{ and } p\nmid b_1 \quad \mbox{ or } \quad p\nmid a_2\left(a_2^tB+\left(-b_1\right)^t\right),\]
  where $a_2=A/p$ and $b_1=\frac{B+(-B)^{p^j}}{p}$ with $p^j\mid\mid tm$;
  \item \label{JKS:C3Main} when $p\nmid A$ and $p\mid B$, then
  \[\mbox{either } \quad p\mid a_1 \mbox{ and } p\nmid b_2 \quad \mbox{ or } \quad p\nmid a_1b_2^{m-1}\left(Aa_1^{t-1}+\left(-b_2\right)^{t-1}\right),\]
  where $a_1=\frac{A+(-A)^{p^l}}{p}$ with $p^l\mid\mid (t-1)m$, and $b_2=B/p$;
  \item \label{JKS:C4} when $p\nmid AB$ and $p\mid m$ with $n=s^{\prime}p^k$, $m=sp^k$, $p\nmid \gcd\left(s^{\prime},s\right)$, then %the polynomials
   \begin{equation*}
     H_1(x):=x^{s^{\prime}}+Ax^s+B \quad \mbox{and}\quad H_2(x):=\dfrac{Ax^{sp^k}+B+\left(-Ax^s-B\right)^{p^k}}{p}
   \end{equation*}
   are coprime modulo $p$;
         \item \label{JKS:C5Main} when $p\nmid ABm$, then $p^2\nmid \left(t^tB^{t-1}-(-1)^{t}(t-1)^{t-1}A^t\right)$.
   \end{enumerate}
\end{thm}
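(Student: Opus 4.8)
The plan is to prove the five equivalences and the concluding distinctness claim by combining three ingredients: the explicit discriminant of $f$, the Galois dictionary of Theorem~\ref{Thm:HJCSextic1}, and the local monogenicity test of Theorem~\ref{Thm:JKS1}. First I would specialize Theorem~\ref{Thm:Swan} to $n=6$, $m=3$ (so $d=3$, $t=2$) to record
\[\Delta(f)=3^6B^2\delta^3, \qquad \delta=A^2-4B,\]
so that by Theorem~\ref{Thm:mono} monogenicity is equivalent to the statement that no prime dividing $3B\delta$ divides $\left[\Z_K:\Z[\theta]\right]$. The engine of the proof is a once-and-for-all ``master'' analysis of Theorem~\ref{Thm:JKS1} for this fixed shape, organized by the type of prime: for $p\mid B$ the conditions~\eqref{JKS:C1Main} and~\eqref{JKS:C3Main} force $B$ to be squarefree away from $2$ and $3$, while for $p\mid\delta$ with $p\nmid 6B$ condition~\eqref{JKS:C5Main} with $t=2$ reduces exactly to $p^2\nmid(4B-A^2)$, i.e.\ $p^2\nmid\delta$. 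The primes $p=2$ and $p=3$ are genuinely special (note $3^6\mid\Delta(f)$ always, and $p=3\mid m$ throws us into condition~\eqref{JKS:C4} when $3\nmid AB$), so each is treated separately, yielding congruence conditions modulo powers of $2$ and $3$ that are \emph{independent} of the Galois group.

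With these group-free monogenicity conditions in hand, I would dispatch the cases by intersecting them with Theorem~\ref{Thm:HJCSextic1}. For the three groups requiring $-3\delta$ to be a square, write $\delta=-3c^2$; then every prime $p\mid c$ with $p\ne3$ satisfies $p^2\mid\delta$, so the master condition at primes dividing $\delta$ forces $c=\pm1$, i.e.\ $\delta=-3$ and $B=(A^2+3)/4$ with $A$ odd. Feeding this back in, the constraint $B=c^3$ together with $B$ squarefree gives $B=1$, $A=\pm1$, which is $\FF_1$ with group $C_6$; dropping $B=c^3$ but keeping $R(x)$ irreducible yields exactly $\FF_5$ (where $\delta=-3$) with group $C_3\times S_3$; and the $S_3$ case, where $R(x)$ is reducible, I would rule out by showing an integer root of $R$ forces $B\mid A$, whence $B^2\mid A^2=4B-3$, a divisibility with no admissible solution. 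This is precisely ``$S_3$ never occurs''. For the two groups with $-3\delta$ not a square, $\delta$ is unconstrained and the master conditions at $2$, $3$ and at primes dividing $B\delta$ produce the congruence-plus-squarefreeness descriptions: the sub-case $B=c^3$ forces $B=\pm1$ and gives $\FF_3$ (for $B=1$) and $\FF_4$ (for $B=-1$); any remaining $C_2\times S_3$ member has $B\ne\pm1$, hence $R$ reducible, and a short Diophantine argument using squarefreeness of $\delta$ collapses these to the two trinomials of $\FF_2$; the remaining generic case ($R$ irreducible, $B\ne c^3$) gives the $S_3\times S_3$ families. Irreducibility of the listed trinomials is ensured throughout by Theorem~\ref{Thm:m=3 HJMS} and Corollary~\ref{Cor:m=3 HJMS}.

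For the final sentence I would treat disjointness and distinctness separately. Disjointness $\FF_i\cap\FF_j=\varnothing$ follows by inspection of the defining shapes, since $B$ is pinned to $1$, $-1$, $2$, or to $(A^2+3)/4$, and $\FF_1,\FF_2$ are explicit. For distinctness the decisive observation is that $\Delta(f)=3^6B^2\delta^3$ depends on $A$ only through $A^2$, and that $\theta\mapsto-\theta$ shows $x^6+Ax^3+B$ and $x^6-Ax^3+B$ generate the same field; hence selecting the positive sign on $x^3$ picks one trinomial per $\pm A$ pair. Within each infinite family $B$ is either constant ($\FF_3$, $\FF_4$) or determined by $A^2$ ($\FF_5$, where $\delta=-3$ forces $\Delta(f)=-3^9B^2$), so the discriminant is a strictly monotone function of $A^2$ once the degenerate values are excluded by the squarefreeness conditions; distinct positive $A$ then give distinct discriminants, and Corollary~\ref{Cor:distinct} yields distinct fields.

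I expect the main obstacle to be the $S_3\times S_3$ case. Establishing that the generic monogenic family is a union of exactly $144$ pairwise-disjoint two-parameter families demands a complete, non-redundant enumeration of the admissible residue classes of $(A,B)$ modulo the relevant powers of $2$ and $3$ coming from conditions~\eqref{JKS:C1Main}--\eqref{JKS:C4}, followed by verifying that these congruence boxes, intersected with the squarefreeness constraints on $B$ and $\delta$, are simultaneously exhaustive and mutually exclusive. The difficulty here is bookkeeping rather than any single hard idea; by contrast, the Diophantine collapses producing $\FF_1$ and $\FF_2$ and the nonexistence of $S_3$ are comparatively short once the master monogenicity conditions are in place.
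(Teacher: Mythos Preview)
Your proposal does not address the stated theorem. Theorem~\ref{Thm:JKS1} is the Jakhar--Khanduja--Sangwan criterion, quoted from \cite{JKS1}; the paper does not prove it and neither do you. Your entire sketch is instead a proof strategy for Theorem~\ref{Thm:Main3}, the paper's main classification result, which \emph{uses} Theorem~\ref{Thm:JKS1} as a black box. You even cite Theorem~\ref{Thm:JKS1} yourself as ``the local monogenicity test'' in your opening paragraph.

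If your intended target was Theorem~\ref{Thm:Main3}, then your outline is essentially the paper's own: specialize Theorem~\ref{Thm:JKS1} to $n=6$, $m=3$ to get a congruence table at $p\in\{2,3\}$ together with squarefreeness of $B$ and of $\delta$ away from $6$ (this is the paper's Theorem~\ref{Thm:m=3}), then intersect with Theorem~\ref{Thm:HJCSextic1} case by case, and count $3\cdot(12+8+28)=144$ residue boxes for $S_3\times S_3$. One place your sketch is too fast: from $-3\delta$ a square you jump directly to $\delta=-3$, arguing that any prime $p\ne3$ dividing $c$ would give $p^2\mid\delta$ and violate the master condition. But condition~\eqref{JKS:C5Main} only applies when $p\nmid 6AB$, so $p=2$ is not covered by that step. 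The paper closes this gap by first proving $A$ is odd in the $S_3$ and $C_3\times S_3$ cases (via the mod-$4$ analysis at the end of the $S_3$ argument), whence $2\nmid\delta$, so $\Gamma=\delta/3^{\nu_3(\delta)}$ is squarefree and $-3\delta$ square forces $\delta=-3^{2b-1}$; a further mod-$9$ check using condition~\eqref{m=3:C4} then pins down $b=1$.
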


\section{The Proof of Theorem \ref{Thm:Main3}}\label{Section:Main3Proof}

Before we begin the proof Theorem \ref{Thm:Main3}, we present an adaptation of Theorem \ref{Thm:JKS1} to the specific trinomial $f(x)=x^6+Ax^3+B$.
%The next two theorems are adaptations of Theorem \ref{Thm:JKS1} to our two specific respective cases $f(x)=x^6+Ax^2+B$ and $f(x)=x^6+Ax^3+B$. %with $m\in \{2,3\}$.

\begin{thm}\label{Thm:m=3}
Suppose that $f(x)=x^6+Ax^3+B\in \Z[x]$ is irreducible over $\Q$. Note that
\begin{equation}\label{Eq:Delf}
\Delta(f)=3^6B^2(A^2-4B)^3 %=3^6B^2\delta^3
\end{equation} by Theorem \ref{Thm:Swan}. Let $K=\Q(\theta)$, where $f(\theta)=0$, and let $\Z_K$ denote the ring of integers of $K$. A prime factor $p$ of $\Delta(f)$ does not divide $\left[\Z_K:\Z[\theta]\right]$ if and only if $p$ satisfies one of the following conditions:
\begin{enumerate}
  \item \label{m=3:C1} when $p\mid A$ and $p\mid B$, then $p^2\nmid B$;
  \item \label{m=3:C2} when $p\mid A$ and $p\nmid B$, then $p\in \{2,3\}$ with\\ %\mbox{either}\quad
    \[(A \mmod{4},\ B \mmod{4})\in \{(0,1), (2,3)\} \quad \mbox{if $p=2$,}\]
    \begin{align*}
  \mbox{and}\quad (A \mmod{9},\ B \mmod{9}) & \in \{(0,2), (0,4), (0,5), (0,7),\\
  & \qquad (3,1), (3,4), (3,7), (3,8),\\
  & \qquad (6,1), (6,4), (6,7), (6,8)\} \quad \mbox{if $p=3$};
  \end{align*}
  \item \label{m=3:C3} when $p\nmid A$ and $p\mid B$, then $p^2\nmid B$ and
    \begin{align*}
  (A \mmod{9},\ B \mmod{9}) & \in \{(1,3), (1,6), (2,3), (4,6),\\
  & \qquad (5,6), (7,3), (8,3), (8,6) \} \quad \mbox{if $p=3$};
  \end{align*}
  \item \label{m=3:C4} when $3\nmid AB$, then 
    \begin{align*}
  (A \mmod{9},\ B \mmod{9}) & \in \{(1, 1), (1, 2), (1, 4), (1, 5), (1, 8), (2, 2), (2, 4),\\
   & \qquad (2, 5), (2, 7), (2, 8), (4, 1), (4, 2), (4, 5), (4, 7),\\
   & \qquad (5, 1), (5, 2), (5, 5), (5, 7), (7, 2), (7, 4), (7, 5),\\
   & \qquad (7, 7), (7, 8), (8, 1), (8, 2), (8, 4), (8, 5), (8, 8)\};
  \end{align*}
  \item \label{m=3:C5} when $p\nmid 3AB$, then $p^2\nmid (A^2-4B)$. %\delta$.
   \end{enumerate}
\end{thm}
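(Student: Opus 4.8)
The plan is to specialize Theorem~\ref{Thm:JKS1} to $n=6$, $m=3$, and $t=n/m=2$, handling its five clauses one at a time and matching each against the corresponding clause of Theorem~\ref{Thm:m=3}. Two of these are essentially immediate. Clause~\eqref{JKS:C1Main} of Theorem~\ref{Thm:JKS1} is literally clause~\eqref{m=3:C1}. For clause~\eqref{m=3:C5}, the hypothesis $p\nmid 3AB$ is exactly the hypothesis $p\nmid ABm$ of clause~\eqref{JKS:C5Main} (since $m=3$), and the displayed quantity there specializes to $t^tB^{t-1}-(-1)^t(t-1)^{t-1}A^t=4B-A^2=-(A^2-4B)$, so the condition reads $p^2\nmid(A^2-4B)$, as claimed.

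For clause~\eqref{m=3:C2} I would first pin down which primes can occur. If $p\mid A$, $p\nmid B$, and $p\mid\Delta(f)=3^6B^2(A^2-4B)^3$, then $p\nmid B^2$ forces $p\mid 3^6$ or $p\mid(A^2-4B)$; in the latter case $p\mid A$ gives $p\mid 4B$, whence $p=2$. Thus $p\in\{2,3\}$. With $tm=6$ one has $p^j\mid\mid 6$ giving $j=1$ for both $p=2$ and $p=3$, so $a_2=A/p$ and $b_1=(B+(-B)^p)/p$, i.e. $b_1=(B+B^2)/2$ when $p=2$ and $b_1=(B-B^3)/3$ when $p=3$. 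Substituting into the two alternatives of clause~\eqref{JKS:C2Main} and running a residue analysis --- modulo $4$ for $p=2$ and modulo $9$ for $p=3$, where $a_2$ and $b_1$ modulo $p$ are determined by $A,B$ modulo $p^2$ --- reproduces exactly the two and twelve admissible pairs listed. Clause~\eqref{m=3:C3} is handled the same way: here $p^l\mid\mid(t-1)m=3$, so $l=0$ and $a_1=(A-A)/p=0$ for every $p\ne 3$, which collapses the alternatives of clause~\eqref{JKS:C3Main} to $p\nmid b_2$, i.e. $p^2\nmid B$; for $p=3$ one has $l=1$, $a_1=(A-A^3)/3$, $b_2=B/3$, and the same mod-$9$ bookkeeping yields the eight listed pairs (all of which already encode $9\nmid B$).

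The real work is clause~\eqref{m=3:C4}. Since $p\nmid AB$ and $p\mid m=3$ force $p=3$, and $n=6=2\cdot 3$, $m=3=1\cdot 3$ give $s'=2$, $s=1$, $k=1$, clause~\eqref{JKS:C4} asks that
\[
H_1(x)=x^2+Ax+B \quad\text{and}\quad H_2(x)=\frac{Ax^{3}+B+(-Ax-B)^{3}}{3}
\]
be coprime modulo $3$. Expanding the cube and reducing modulo $3$ (using $A^2\equiv B^2\equiv 1$ since $3\nmid AB$) turns $H_2$ into a cubic whose coefficients depend only on $A,B\bmod 9$ through $a_1=(A-A^3)/3$ and $b_1=(B-B^3)/3$. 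The cleanest route is to reduce $H_2$ modulo $H_1$ in $\F_3[x]$ to a linear remainder $Lx+M$, after which coprimality is decided by a short trichotomy: the pair is coprime unless $L\equiv M\equiv 0$ (then $H_1\mid H_2$), or $L\not\equiv 0$ and the root $-M/L$ of the remainder is also a root of $H_1$. Running this over the $36$ residue pairs $(A\bmod 9,B\bmod 9)$ with $3\nmid AB$ leaves precisely the $28$ listed pairs. The main obstacle is purely organizational: keeping the mod-$9$ data for $a_1,b_1$ straight across all $36$ cases and verifying that the trichotomy singles out exactly the stated list rather than a near-miss. A resultant computation $\mathrm{Res}(H_1,H_2)\bmod 3$ offers an alternative bookkeeping device should the remainder calculation prove error-prone.
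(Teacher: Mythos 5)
Your proposal is correct and follows essentially the same route as the paper: a clause-by-clause specialization of Theorem~\ref{Thm:JKS1} to $n=6$, $m=3$, $t=2$, followed by residue bookkeeping modulo $4$ and modulo $9$. In fact you supply more detail than the paper (which works out only condition~\eqref{m=3:C2}), including the divisibility argument $p\mid\Delta(f)$, $p\mid A$, $p\nmid B$ $\Rightarrow$ $p\mid 4B\cdot 3^6$ $\Rightarrow$ $p\in\{2,3\}$ that the paper leaves implicit.
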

\begin{proof}
 Since the methods used for the adaptation of Theorem \ref{Thm:JKS1} to the specific trinomial $f(x)=x^6+Ax^3+B$ are straightforward and mainly computational, we provide details only for condition \eqref{m=3:C2}. Suppose that $p\mid A$ and $p\nmid B$. Then, from condition \eqref{JKS:C2Main} of Theorem \ref{Thm:JKS1}, we have that
 $n=6$, $m=3$, $t=2$ and $p^j\mid \mid 6$. Hence, $p\in \{2,3\}$ with $a_2=A/p$ and
 \[b_1=\left\{\begin{array}{cl}
   \frac{B+B^2}{2} & \mbox{if $p=2$}\\[.5em]
   \frac{B-B^3}{3} & \mbox{if $p=3$.}
 \end{array}\right.\] Then, it is easy to calculate the congruence classes of $A$ and $B$ modulo $p^2$ for which either $p\mid a_2$ and $p\nmid b_1$, or $p\nmid a_2(a_2^2B+b_1^2)$.
\end{proof}
 We observe that further refinements of Theorem \ref{Thm:m=3} are possible. In particular, if $f(x)$ is monogenic, then $B$ must be squarefree from conditions \eqref{m=3:C1} and \eqref{m=3:C3}.
 Consequently,
 \begin{equation}\label{Eq:Gamma}
 \Gamma:=(A^2-4B)/(2^{\nu_2(A^2-4B)}3^{\nu_3(A^2-4B)})
  \end{equation} must also be squarefree from condition \eqref{m=3:C5} of Theorem \ref{Thm:m=3}. 
  These observations will be useful in establishing the converse of each part of Theorem \ref{Thm:m=3}. More precisely,  for a given Galois group $G$ and a monogenic trinomial $f(x)=x^6+Ax^3+B$ with $\Gal(f)\simeq G$, the fact that both $B$ and $\Gamma$ are squarefree will be especially helpful in showing that $f(x)\in \FF_i$ for the appropriate value of $i$. 

\begin{proof}[Proof of Theorem \ref{Thm:Main3}]

Considering, one at a time, each of the possible Galois groups, 
\[C_6,\ S_3, \ C_2\times S_3, \ C_3\times S_3,\] we use \eqref{Eq:Delf}, Theorem \ref{Thm:HJCSextic1} and Theorem \ref{Thm:m=3} to show that each $f(x)\in \FF_i$ is monogenic with the prescribed Galois group. We then establish the claim concerning when the trinomials in $\FF_i$ are distinct, and show that each $\FF_i$, with $i\ge 3$, contains infinitely many distinct trinomials. Finally, we prove the converse for each of these groups. For the special case $\Gal(f)\simeq S_3 \times S_3$, we use Theorem \ref{Thm:m=3} to illustrate how to construct infinite 2-parameter families to ``capture" all monogenic trinomials $f(x)$. 

 We recall that
\begin{equation}\label{Eq:R}
R(x)=x^3-3Bx+AB.
\end{equation}
\subsection*{{\bf The Case} $\mathbf{C_6}$}
\subsubsection*{$\mathbf{\FF_1}$}
Let $f^-(x)=x^6-x^3+1$ and $f^+(x)=x^6+x^3+1$. Straightforward calculations (using Maple, \eqref{Eq:Delf}, Theorem \ref{Thm:HJCSextic1} and Theorem \ref{Thm:m=3})
confirm that $f^-(x)$ and $f^+(x)$ are irreducible and monogenic with
\[\Delta(f^-)=\Delta(f^+)=-3^9 \quad \mbox{and} \quad \Gal(f^-)=\Gal(f^+)\simeq C_6.\]
 Furthermore, if $f^+(\theta)=0$, then since $f^-(-\theta)=0$, it follows that $f^-(x)$ and $f^+(x)$ are not distinct. 
 
 For the converse, suppose that $f(x)=x^6+Ax^3+B$ is monogenic with $\Gal(f)\simeq C_6$. Then $B=c^3$ for some nonzero integer $c$, by Theorem \ref{Thm:HJCSextic1}. But $B$ is squarefree, since $f(x)$ is monogenic. Hence, $B=\pm 1$. Also, by Theorem \ref{Thm:HJCSextic1}, we have that 
 $-3(A^2-4B)$ is a square, which is impossible if $B=-1$. Thus, $B=1$ so that 
 \[-3(A^2-4B)=-3(A^2-4)=12-3A^2\] is a square. Note that $12-3A^2\ne 0$ since $f(x)$ is irreducible. Therefore, $A=\pm 1$, which completes the proof of item \eqref{M3:I1} of the theorem. 

\subsection*{{\bf The Case} $\mathbf{S_3}$} Suppose that $f(x)=x^6+Ax^3+B\in \Z[x]$  is irreducible over $\Q$ with $\Gal(f)\simeq S_3$. Observe that if $2\nmid AB$, then
\begin{equation}\label{Eq:R mod 2}
R(x)\equiv x^3+x+1 \pmod{2}
\end{equation} is irreducible in $\F_2[x]$, and consequently, $R(x)$ is irreducible over $\Q$, which contradicts Theorem \ref{Thm:HJCSextic1}. Therefore, $2\mid AB$. Note that if $2\nmid A$ and $2\mid B$, then
\[-3(A^2-4B)\equiv 5 \pmod{8},\] which contradicts the fact that $-3(A^2-4B)$ is a square from Theorem \ref{Thm:HJCSextic1}.
To rule out the final two possibilities
\[(A\mmod{2},\ B\mmod{2})\in \{(0,0),\ (0,1)\},\]
we assume that $f(x)$ is monogenic, and proceed by way of contradiction invoking Theorem \ref{Thm:m=3}.

If $2\mid A$ and $2\mid B$, then $B \mmod{4}=2$ by condition \eqref{m=3:C1} of Theorem \ref{Thm:m=3}, and $4\mid (A^2-4B)$. Thus, 
\[\frac{-3(A^2-4B)}{4}=-3\left((A/2)^2-B\right)\equiv \left\{\begin{array}{cl}
  2 \pmod{4} & \mbox{if $4\mid A$,}\\
  3 \pmod{4} &  \mbox{if $4\nmid A$,}
\end{array}\right.\] which contradicts the fact that $-3(A^2-4B)$ is a square from Theorem \ref{Thm:HJCSextic1}.

The final situation to consider is when $2\mid A$ and $2\nmid B$.  Since $f(x)$ is monogenic and $4\mid (A^2-4B)$, we have from condition \eqref{m=3:C2} of Theorem \ref{Thm:m=3} with $p=2$ that \[(A \mmod{4},\ B \mmod{4})\in \{(0,1), (2,3)\}.\] But it is then easy to verify that
\[\frac{-3(A^2-4B)}{4}\equiv \left\{\begin{array}{cl}
  3 \pmod{4} & \mbox{if $(A \mmod{4},\ B \mmod{4})=(0,1)$},\\
  2 \pmod{4} & \mbox{if $(A \mmod{4},\ B \mmod{4})=(2,3)$},\\
\end{array}\right.\] which again contradicts the fact that $-3(A^2-4B)$ is a square from Theorem \ref{Thm:HJCSextic1}. This final contradiction completes the proof of item \eqref{M3:I1.5}.

\subsection*{{\bf The Case} $\mathbf{C_2 \times S_3}$}
\subsubsection*{$\mathbf{\FF_2}$}  Since the arguments here for the monogenicity of the two elements of $\FF_2$, and the fact that they are not distinct,  are similar to the case $\FF_1$, we omit the details.

 \subsubsection*{$\mathbf{\FF_3}$} Let $f(x)=x^6+Ax^3+1$ such that
 \[A\mmod{9}\ne 0,\quad  A\ne \pm 1,\quad \mbox{with}\ A-2 \ \mbox{and} \ A+2 \ \mbox{squarefree.}\] The restriction $A\mmod{9}\ne 0$ is to avoid the reducibility of $f(x)$ when $A=0$, and the nonmonogenicity of $f(x)$ otherwise. The restriction that $A\ne \pm 1$ is to avoid overlap with $\FF_1$.
 The restriction that $A-2$ and $A+2$ be squarefree is required for the irreducibility and monogenicity of $f(x)$, as we shall see below.

 We show first that $f(x)$ is irreducible over $\Q$. Let $h(x)=x^2+Ax+1$. It is easy to see that the only solutions to the Diophantine equation $A^2-4=y^2$ are $(A,y)\in \{(-2,0), (2,0)\}$. Hence, $h(x)$ is irreducible over $\Q$ since $A-2$ and $A+2$ are squarefree. Thus, if $f(x)=h(x^3)$ is reducible over $\Q$, we have by Theorem \ref{Thm:m=3 HJMS} that $A=m^3-3m$ for some $m\in \Z$. But then
 \[A-2=(m-2)(m+1)^2 \quad \mbox{and} \quad A+2=(m+2)(m-1)^2,\] which is impossible since $A-2$ and $A+2$ are squarefree, and $A\ne 0$. Therefore, $f(x)$ is irreducible over $\Q$.

 Observe that
 \begin{equation}\label{Delf F3}
 \Delta(f)=3^6(A-2)^3(A+2)^3
 \end{equation} from \eqref{Eq:Delf}. For the monogenicity of $f(x)$, we first consider the prime divisor $p=3$ of $\Delta(f)$. If $3\mid A$, then
 \[(A \mmod{9},\ B\mmod{9})\in \{(3,1),(6,1)\},\] so that condition \eqref{m=3:C2} of Theorem \ref{Thm:m=3} is satisfied. If $3\nmid A$, then $A\not \equiv \pm 2\Mod{9}$ since $A\mp 2$ is squarefree. Hence,
 \[(A \mmod{9},\ B \mmod{9})\in \{(1,1),(4,1),(5,1),(8,1)\},\] so that condition \eqref{m=3:C3} of Theorem \ref{Thm:m=3} is satisfied.

 Next, suppose that $p\mid (A-2)$ with $p\ne 3$. If $p\mid A$, then $p=2$. It follows that $A \mmod{4}=0$ since $A-2$ and $A+2$ are squarefree. Hence, $(A\mmod{4},\ B\mmod{4})=(0,1)$ so that condition \eqref{m=3:C2} of Theorem \ref{Thm:m=3} is satisfied. If $p\nmid A$, then $p\ne 2$ and $p\nmid (A+2)$ so that $p^2\nmid (A^2-4)$ since
  $A-2$ and $A+2$ are squarefree. A similar argument shows that $p^2\nmid (A^2-4)$ if $p\mid (A+2)$ with $p\ne 3$. Hence, condition \eqref{m=3:C5} of Theorem \ref{Thm:m=3} is satisfied, and we conclude from Theorem \ref{Thm:m=3} that $f(x)$ is monogenic.

    Note that $A^2-4>0$ from the restrictions on $A$. Thus, $-3(A^2-4B)$ is not a square. Since $B=c^3$ with $c=1$, we conclude from Theorem \ref{Thm:HJCSextic1} that $\Gal(f)\simeq C_2 \times S_3$.

  Suppose that
  \[f_1(x)=x^6+A_1x^3+1\in \FF_3 \quad \mbox{and} \quad f_2(x)=x^6+A_2x^3+1\in \FF_3,\] such that $f_1(x)\ne f_2(x)$ and $f_i(\theta_i)=0$. Since $A_1\ne A_2$ and $A_i^2-4>0$, we see from \eqref{Delf F3} that if $\Delta(f_1)=\Delta(f_2)$, then
  \[(A_1-A_2)(A_1+A_2)\left((A_1^2-4)^2+(A_1^2-4)(A_2^2-4)+(A_2^2-4)^2\right)=0,\] which implies that $A_1=-A_2$. Consequently, trinomials in $\FF_3$ with positive coefficient on $x^3$  are distinct by Corollary \ref{Cor:distinct}.

   Since $9\nmid A$, suppose that $A=9k+r$ for some fixed integer $r$ with $9\nmid r$.  Then, there exist infinitely many integers $k>0$ such that
  \[(A-2)(A+2)=(9k-r-2)(9k+r+2)\] is squarefree \cite{BB}. Hence, we conclude that there exist infinitely many distinct trinomials in $\FF_3$.

  \subsubsection*{$\mathbf{\FF_4}$} Let $f(x)=x^6+Ax^3-1$ such that
 \[A\mmod{4}\ne 0,\quad  A \mmod{9}\not \in \{0,4,5\},\quad \mbox{with}\ (A^2+4)/\gcd(A^2+4,4) \ \mbox{squarefree.}\]
 The argument for the irreducibility of $f(x)$ here is similar to the situation of $\FF_3$, and we omit the details.

 From \eqref{Eq:Delf}, we have
 \begin{equation*}\label{Delf F4}
 \Delta(f)=3^6(A^2+4)^3.
 \end{equation*} To establish monogenicity, we proceed as before, starting with $p=3$. If $3\mid A$, then
 \[(A \mmod{9},\ B\mmod{9})\in \{(3,8),(6,8)\},\] so that condition \eqref{m=3:C2} of Theorem \ref{Thm:m=3} is satisfied. If $3\nmid A$, then
 \[(A \mmod{9},\ B\mmod{9})\in \{(1,8),(2,8),(7,8),(8,8)\},\] so that condition \eqref{m=3:C4} of Theorem \ref{Thm:m=3} is satisfied.

 Next, suppose that $p\mid (A^2+4)$ with $p\ne 3$. If $p=2$, then $2\mid A$ so that $(A \mmod{4},\ B\mmod{4})=(2,3)$ since $4\nmid A$. Hence, condition \eqref{m=3:C2} of Theorem \ref{Thm:m=3} is satisfied. Assume then that $p\ne 2$. Note that $p\nmid A$. Thus, $p^2\nmid (A^2+4)$ since $(A^2+4)/\gcd(A^2+4,4)=A^2+4$ is squarefree. Therefore, condition \eqref{m=3:C5} of Theorem \ref{Thm:m=3} is satisfied, and $f(x)$ is monogenic.

 Arguments similar to the situation of $\FF_3$ verify here that $\Gal(f)\simeq C_2 \times S_3$ and that trinomials in $\FF_4$ with positive coefficient on $x^3$ are distinct. Moreover, if $A=36k+1$, then there exist infinitely many positive integers $k$ such that
 \[\frac{A^2+4}{\gcd(A^2+4,4)}=\frac{(36k+1)^2+4}{\gcd((36k+1)^2+4,4)}=1296k^2+72k+5\] is squarefree \cite{Nagel}, which implies that $\FF_4$ contains infinitely many distinct trinomials.

 We turn now to the converse. That is, we assume that $f(x)=x^6+Ax^3+B$ is monogenic with $\Gal(f)\simeq C_2 \times S_3$, and we show that $f(x)\in \FF_2\cup \FF_3\cup \FF_4$. Since $f(x)$ is monogenic, we recall that $B$ and $\Gamma$ (from \eqref{Eq:Gamma}) are squarefree from conditions \eqref{m=3:C1}, \eqref{m=3:C3} and \eqref{m=3:C5} of Theorem \ref{Thm:m=3}.

 Suppose first that $\abs{B}\ge 2$. Then, since $B$ is squarefree, it follows that $B\ne c^3$ for any $c\in \Z$. Hence, $R(x)$ must be reducible by Theorem \ref{Thm:HJCSextic1} since $\Gal(f)\simeq C_2 \times S_3$. Consequently, $2\mid AB$ since otherwise $R(x)$ is irreducible over $\Q$, as in the case of $S_3$ in \eqref{Eq:R mod 2}.  Furthermore, if $p$ is a prime such that $p\mid B$ and $p\nmid A$, then $R(x)$ is $p$-Eisenstein since $B$ is squarefree, which implies the contradiction that $R(x)$ is irreducible over $\Q$. We deduce therefore that all primes dividing $B$ must also divide $A$. Hence, since $B$ is squarefree, it follows that
  \begin{equation}\label{Eq:B,2|A}
  B\mid A \quad \mbox{and} \quad 2\mid A.
   \end{equation} 

 Since $R(x)$ is reducible over $\Q$, we have that
 \begin{equation}\label{Eq:R reducible}
  R(x)=(x-r)(x^2+rx+r^2-3B),
 \end{equation}  for some $r\in \Z$.
 Calculating $\Delta(R)$ in two ways, using Theorem \ref{Thm:Swan} for \eqref{Eq:R} and \cite{Janson} for \eqref{Eq:R reducible}, yields
  \begin{equation}\label{Eq:Del R}
  \Delta(R)=-27(A^2-4B)B^2=-27(r^2-4B)(r^2-B)^2.
   \end{equation}  Since $B$ is squarefree, it follows from \eqref{Eq:Del R} that $B\mid r$, so that
   \begin{equation}\label{Eq:From Del R}
   A^2-4B=(r^2-4B)\left(\dfrac{r^2}{B}-1\right)^2.
   \end{equation} Hence,
   \[\Gamma=\frac{A^2-4B}{2^{\nu_2(A^2-4B)}3^{\nu_3(A^2-4B)}}
   =\frac{(r^2-4B)\left(\dfrac{r^2}{B}-1\right)^2}{2^{\nu_2(A^2-4B)}3^{\nu_3(A^2-4B)}}\] is squarefree, and consequently,
   \begin{equation}\label{Eq:Exam}
   \abs{\dfrac{r^2}{B}-1}=2^a3^b
    \end{equation} for some nonnegative integers $a$ and $b$.

    We claim that $b=0$ in \eqref{Eq:Exam}. Suppose, by way of contradiction, that $b \geq 1$. Then,
    \begin{equation}\label{Eq:divisible by 3}
    r^2/B-1\equiv 0 \pmod{3},
     \end{equation} so that $9\mid(r^2/B-1)^2$, which implies that
     \begin{equation}\label{Eq:divisible by 9}
     9\mid (A^2-4B)
     \end{equation} from \eqref{Eq:From Del R}. Hence, since $B$ is squarefree and $B\mid r$, we have that  $3\mid (r^2/B)$, which contradicts \eqref{Eq:divisible by 3}. Thus, $B \equiv r^2\equiv 1 \pmod{3}$, so that $3\nmid A$ from \eqref{Eq:divisible by 9}. Consequently, we have
     \[A\mmod {9}\in \{1,2,4,5,7,8\} \quad \mbox{and}\quad B \mmod{9} \in \{1, 4, 7\}.\] Thus, in light of  \eqref{Eq:divisible by 9}, straightforward calculations reveal that
\[(A\mmod{9},\ B\mmod{9}) \in \{(1,7), (2,1), (4,4), (5,4), (7,1), (8,7)\},\] which contradicts condition \eqref{m=3:C4} of Theorem \ref{Thm:m=3} since $f(x)$ is monogenic. Thus, the claim that $b=0$ is established and
\begin{equation}\label{Eq:Exam1}
   \abs{\dfrac{r^2}{B}-1}=2^a
    \end{equation} for some nonnegative integer $a$.

We claim next that $a\le 1$ in \eqref{Eq:Exam1}. Assume, by way of contradiction, that $a\ge 2$. Then $16\mid (r^2/B-1)^2$ from \eqref{Eq:Exam1}, so that 
\begin{equation}\label{Eq:divisible by 16}
16\mid (A^2-4B)
\end{equation} from \eqref{Eq:From Del R}. If $2\mid B$, then $r^2/B-1\equiv 1\pmod{2}$ since $B$ is squarefree, which contradicts \eqref{Eq:Exam1}. Hence,  $2\nmid B$.  Thus, since $2\mid A$ from \eqref{Eq:B,2|A}, we see that
\[A\mmod {16}\in \{0,2,4,6,8,10,12,14\} \quad \mbox{and}\quad B \mmod{16} \in \{1, 3,5,7,9,11,13,15\}.\] Checking these values with the added restriction of \eqref{Eq:divisible by 16} produces 
\begin{multline*}\label{Eq:Pairs mod 16}
(A\mmod{16},\ B\mmod{16}) \in S=\{(2, 1), (2, 5), (2, 9), (2, 13), (6, 1), (6, 5), (6, 9), \\
 (6, 13),(10, 1), (10, 5), (10, 9), (10, 13), (14, 1), (14, 5), (14, 9), (14, 13)\}.
 \end{multline*} Observe that the reduction modulo 4 of every pair in $S$ results in the same pair $(A \mmod{4},B\mmod{4})=(2,1)$, which contradicts condition \eqref{m=3:C2} of Theorem \ref{Thm:m=3} since $f(x)$ is monogenic. Thus, we have established the claim that $a\le 1$ in \eqref{Eq:Exam1}.

 Suppose then that $a=0$ in \eqref{Eq:Exam1}. If $r^2/B-1<0$, then $r=0$, which yields the contradiction that $AB=0$ from \eqref{Eq:R} since $R(r)=0$. If $r^2/B-1>0$, then, since $B$ is squarefree, we get the two solutions
    \[(A,B,r)\in \{(2,2,2), (-2,2,-2)\}\] from \eqref{Eq:R} since $R(r)=0$, which correspond precisely to the elements of $\FF_2$.

 Suppose next  that $a=1$ in \eqref{Eq:Exam1}. Then 
 \[\frac{r^2}{B}-1=\pm 2,\] from which we deduce that  
  \begin{equation}\label{Eq:B poss}
  (r,B)\in \{(-1,-1),\ (1,-1),\ (-3,3), \ (3,3) \}.
    \end{equation} The first two pairs in \eqref{Eq:B poss} are impossible since $\abs{B}\ge 2$, while the last two pairs both produce the contradiction that $A=0$ by \eqref{Eq:R} since $R(r)=0$. 
        Therefore, we may assume that $\abs{B}=1$. 
        
        We first consider $B=1$, so that $f(x)=x^6+Ax^3+1$. Note that if $\abs{A}=1$, then \[-3\delta=-3(A^2-4B)=9,\] which contradicts the fact that $-3\delta$ is not a square from Theorem \ref{Thm:HJCSextic1}. Hence, $\abs{A}\ne 1$. If $9\mid A$, then 
        \[(A\mmod{9}, \ B\mmod{9})=(0,1),\] which contradicts condition \eqref{m=3:C2} of Theorem \ref{Thm:m=3} since $f(x)$ is monogenic. Thus, $9\nmid A$. 
        Next, we prove that $A-2$ is squarefree. Assume, by way of contradiction, that $A-2$ is not squarefree, and let $p$ be a prime divisor of $A-2$ such that $p^2\mid (A-2)$. Then $p^2\mid (A^2-4)$, and $p\mid 3A$ by condition \eqref{m=3:C5} of Theorem \ref{Thm:m=3}. Suppose that $p\mid A$. Then $p^2\mid A^2$, so that $p^2\mid 4$. Hence, $p=2$ and $2\mid A$. Thus, since $B\equiv 1 \pmod{4}$, we must have that  $(A \mmod{4},B\mmod{4})=(0,1)$ from condition \eqref{m=3:C2}. But then $A-2\equiv 2 \pmod{4}$, contradicting the assumption that $2^2\mid (A-2)$. Therefore, $p=3$ and $p\nmid A$. Since $3^2\mid(A-2)$, it follows that $A\equiv 2\pmod{9}$, which implies that $(A\mmod{9},B\mmod{9})=(2,1)$, contradicting condition \eqref{m=3:C4} of Theorem \ref{Thm:m=3}. Hence, $A-2$ is squarefree. The proof that $A+2$ is squarefree is similar, and so we omit the details. Thus, $f(x)\in \FF_3$, and the proof for $B=1$ is complete. 
        
        Suppose now that $B=-1$, so that $f(x)=x^6+Ax^3-1$. If $4\mid A$, then $(A \mmod{4},B \mmod{4})=(0,3)$ which contradicts condition \eqref{m=3:C2} of Theorem \ref{Thm:m=3}. Thus, $4\nmid A$. Similarly, if $9\mid A$, then $(A \mmod{9},B \mmod{9})=(0,8)$ which contradicts condition \eqref{m=3:C2} of Theorem \ref{Thm:m=3}; and if $3\nmid A$ with $(A \mmod{9},B \mmod{9})\in \{(4,8),(5,8)\}$, we have a contradiction with \eqref{m=3:C4} of Theorem \ref{Thm:m=3}. Consequently, $A \mmod{9}\not \in \{0,4,5\}$. Next, we claim that $\Omega:=(A^2+4)/\gcd(A^2+4,4)$ is squarefree. Note that 
         \[\Omega=\left\{\begin{array}{cl}
           (A^2+4)/4 & \mbox{if $2\mid A$},\\[.5em]
           A^2+4 &  \mbox{if $2\nmid A$.}
         \end{array} \right.\] By way of contradiction, assume that $p^2\mid \Omega$ for some prime $p$. 
         %We consider two cases, according to the parity of $A$. 
         If $2\mid A$, then $2\mid \mid \Omega$ since $4\nmid A$. Thus, $p\ge 3$ regardless of the parity of $A$. Since $f(x)$ is monogenic, we have by condition \eqref{m=3:C5} of Theorem \ref{Thm:m=3} that $p\mid 3A$. Observe that $3\nmid \Omega$ since $-1$ is not a square modulo 3; and if $p\mid A$, we arrive at the contradiction that $p=2$. Hence, the claim that $\Omega$ is squarefree has been established, which verifies that $f(x)\in \FF_4$, and completes the proof of item \eqref{M3:I2} of the theorem.

 \subsection*{{\bf The Case} $\mathbf{C_3 \times S_3}$}
  \subsubsection*{$\mathbf{\FF_5}$} Let $f(x)=x^6+Ax^3+(A^2+3)/4$ such that
 \[A\mmod{2}=1,\  A\ne \pm 1,\ \mbox{with}\ B=(A^2+3)/4 \ \mbox{squarefree.}\] Let $h(x)=x^2+Ax+(A^2+3)/4$, which is irreducible over $\Q$ since $\Delta(h)=-3$. Then, if $f(x)=h(x^3)$ is reducible, it follows from Theorem \ref{Thm:m=3 HJMS} that the Diophantine equation
 \begin{equation}\label{Eq:Dio}
 \D:\quad  (A^2+3)/4=n^3
 \end{equation} has a solution. Hence, $n=\pm 1$ since $(A^2+3)/4$ is squarefree. If $n=-1$, then
 $\D$ in \eqref{Eq:Dio} has no integer solutions, and if $n=1$, we see that $A=\pm 1$, which contradicts the restriction on $A$.   Hence, $f(x)$ is irreducible over $\Q$.

 From \eqref{Eq:Delf}, we have
 \begin{equation*}\label{Delf F5}
 \Delta(f)=-3^9\left(\frac{A^2+3}{4}\right)^2.
 \end{equation*} For the monogenicity of $f(x)$, we consider first the prime divisor $p=3$ of $\Delta(f)$. If $3\mid A$, then $3\mid (A^2+3)/4$. Thus, condition \eqref{m=3:C1} of Theorem \ref{Thm:m=3} is satisfied since $(A^2+3)/4$ is squarefree. If $3\nmid A$, then $3\nmid (A^2+3)/4$ and, with the restrictions on $A$, we have
 \[(A \mmod{9},\ B \mmod{9})\in \{(1, 1), (2, 4), (4, 7), (5, 7), (7, 4), (8, 1)\}.\]
  Thus, condition \eqref{m=3:C4} of Theorem \ref{Thm:m=3} is satisfied.

  Suppose next that $p\ne 3$ is a prime divisor of $(A^2+3)/4$. Observe that $p\ne 2$ since $2\nmid A$, and that $p\nmid A$. Thus, condition \eqref{m=3:C3} of Theorem \ref{Thm:m=3} is satisfied since $(A^2+3)/4$ is squarefree. Hence, $f(x)$ is monogenic.

  To show that $\Gal(f)\simeq C_3 \times S_3$, we use Theorem \ref{Thm:HJCSextic1}. We have already shown that the equation $\D$ in \eqref{Eq:Dio} has no integer solutions. An easy calculation reveals that $-3\delta=3^2$. Finally, we must show that
  \[R(x)=x^3-3\left(\frac{A^2+3}{4}\right)x+A\left(\frac{A^2+3}{4}\right)\] is irreducible over $\Q$. From the previous discussion, we see that
  \[d:=\gcd(A,(A^2+3)/4)\in \{1,3\}.\] If $d=1$, then there exists a prime divisor $p\ne 3$ of $(A^2+3)/4$ such that $p\nmid A$. Since $(A^2+3)/4$ is squarefree, it follows that $R(x)$ is $p$-Eisenstein and, therefore, irreducible over $\Q$. If $d=3$, then we can let $A=3k$ for some $k\in \Z$. Then
  \[A\left(\frac{A^2+3}{4}\right)=3^2k\left(\frac{3k^2+1}{4}\right).\] If $\abs{k}>1$, then $(3k^2+1)/4>1$ and there exists a prime divisor $p$ of $(3k^2+1)/4$ such that $p\nmid 3^2k$. Since $(3k^2+1)/4$ is squarefree, it follows that $R(x)$ is $p$-Eisenstein and irreducible over $\Q$. If $\abs{k}=1$, then
  \[R(x)=x^3-9x-9 \quad \mbox{or} \quad R(x)=x^3-9x+9,\] both of which are irreducible in $\F_2[x]$, and hence irreducible over $\Q$. Thus, $\Gal(f)\simeq C_3\times S_3$.

  Using arguments similar to previous situations show that trinomials in $\FF_5$ with positive coefficient on $x^3$ are all distinct, and that there exist infinitely many such trinomials in $\FF_5$.

For the converse, we assume that $f(x)=x^6+Ax^3+B$ is monogenic with $\Gal(f)\simeq C_3 \times S_3$, and we show that $f(x)\in \FF_5$. We first note that $2\nmid A$, and since the proof is identical to the proof given in the last two paragraphs of {\bf The Case} $\mathbf{S_3}$, we omit the details here. Then, $2\nmid (A^2-4B)$ so that
\[\Gamma=\dfrac{A^2-4B}{3^{\nu_3(A^2-4B)}}.\] Since $\Gamma$ is squarefree, and $-3(A^2-4B)$ is a square  by Theorem \ref{Thm:HJCSextic1}, it follows that 
\begin{equation*}
  -3(A^2-4B)=3^{2b}
\end{equation*} for some integer $b\ge 1$,
which implies that 
\begin{equation}\label{Eq:B}
  B=\dfrac{A^2+3^{2b-1}}{4}.
\end{equation} 

Next, we claim that $A\ne \pm 1$. Assume, by way of contradiction, that $A=\pm 1$. If $B=1$, then $f(x)\in \FF_1$, and if $B=-1$, then $f(x)\in \FF_4$. Thus, $\abs{B}\ge 2$.  Observe that if $b=1$ in \eqref{Eq:B}, then $B=1$, which contradicts the fact that $\abs{B}\ge 2$. Hence, $b\ge 2$, so that $B\equiv 7 \pmod{9}$. Therefore, $(A \mmod{9}, B \mmod{9})\in \{(1,7), (8,7)\}$, which contradicts condition \eqref{m=3:C4} of Theorem \ref{Thm:m=3}. Thus, $A\ne \pm 1$.  

Suppose that $b\ge 2$ in \eqref{Eq:B}. If $3\mid A$, then $9\mid B$, which contradicts the fact that $B$ is squarefree. Hence, $3\nmid A$ and $B\equiv 7A^2 \pmod{9}$ from \eqref{Eq:B}, so that $3\nmid B$. Consequently,  
\[(A \mmod{9}, \ B\mmod{9})\in \{(1,7),(2,1),(4,4),(5,4),(7,1),(8,7)\},\] which contradicts condition \eqref{m=3:C4} of Theorem \ref{Thm:m=3}. Thus, $b=1$ and 
$B=(A^2+3)/4$ from \eqref{Eq:B}. Since $f(x)$ is monogenic, we have that $B$ is squarefree, which completes the proof of item \eqref{M3:I3} of the theorem.

  \subsection*{{\bf The Case} $\mathbf{S_3 \times S_3}$}
  Aside from $B$ and $\Gamma$ being squarefree, the main focus in Theorem \ref{Thm:m=3} is on primes $p\in \{2,3\}$ for the monogenicity of $f(x)$. An examination of Theorem \ref{Thm:m=3} reveals that   
  \begin{itemize}
    \item condition \eqref{m=3:C2} addresses the prime $p=2$ in the  cases $A \mmod{4}\in \{0,2\}$ when $2\mid A$, 
           while the prime $p=3$ is handled in 12 separate cases when $3\mid A$;
        \item condition \eqref{m=3:C3} addresses the prime $p=3$ in exactly 8 separate cases;
        \item condition \eqref{m=3:C4} addresses the prime $p=3$ in exactly 28 separate cases.   
  \end{itemize} Certainly, $p=2$ does not necessarily divide $\Delta(f)$, but $p=3$ always divides $\Delta(f)$. Thus, we must also consider the third possibility for the prime $p=2$ that $2\nmid A$, together with the possibilities for $p=3$ found in Theorem \ref{Thm:m=3}. This analysis leads to a total of 3(12+8+28)=144 mutually exclusive cases. In each of these cases, an infinite 2-parameter family of monogenic trinomials $f(x)$  can be constructed, and with suitable additional restrictions if necessary, there exist infinitely many trinomials in each of these families with $\Gal(f)\simeq S_3\times S_3$. We point out that such an approach could have been used for the other Galois groups, but it does not seem to lend itself so readily to yield single-parameter families.   
  
  Since the methods are similar in each of the $144$ cases, we provide details in only two cases. The first case is 
  \begin{equation}\label{Eq:first case}
  2\nmid A \quad \mbox{and} \quad 3\mid A \quad \mbox{with} \quad (A \mmod{9}, \ B \mmod{9})=(0,2).
  \end{equation} By the Chinese Remainder Theorem, we have that $A\equiv 9 \pmod{18}$, and so we can let $A=18s+9$, where $s$ is an integer parameter. Letting $B=9t+2$, where $t$ is an integer parameter, we then have that 
  \begin{equation}\label{Eq:case 1} 
  f(x)=x^6+Ax^3+B=x^6+(18s+9)x^3+9t+2,
  \end{equation} with 
  \[\Gamma=\delta=A^2-4B=324s^2+324s-36t+73\] and 
  \[\Delta(f)=3^6(9t+2)^2(324s^2+324s-36t+73)^3.\] There exist infinitely many integers $t$ such that $B=9t+2$ is squarefree \cite{Prachar}, and for each such value of $t$, there exist infinitely many integers $s$ such that $\Gamma$ is squarefree \cite{Nagel}. Let $s$ and $t$ be such integers with $\Gamma\ne 1$. Then $f(x)$ is irreducible over $\Q$ by Corollary \ref{Cor:m=3 HJMS}, and is therefore monogenic by construction. 
  
  Since $3\nmid (A^2-4B)$, we see that $-3\delta$ is not a square. If $B=9t+2=c^3$ for some $c\in \Z$, then $9t+2=\pm 1$ since $9t+2$ is squarefree, which yields the contradiction that $t\in \{-1/3,-1/9\}$.  
    Thus, $9t+2\ne c^3$ for any $c\in \Z$. Finally, if $2\nmid (9t+2)$, then $R(x)$ is irreducible in $\F_2[x]$, and if $2\mid (9t+2)$, then $R(x)$ is $2$-Eisenstein since $B$ is squarefree. Hence, $R(x)$ is irreducible over $\Q$ and $\Gal(f)\simeq S_3\times S_3$ by Theorem \ref{Thm:HJCSextic1}. We note that no additional restrictions on the parameters $s$ and $t$ are required in this case to achieve $\Gal(f)\simeq S_3\times S_3$.   
  
  Thus, with $f(x)$ from \eqref{Eq:case 1}, we have shown that 
  \[\FF_6:=\{f(x): B \ \mbox{and} \ \Gamma \ \mbox{are squarefree}\}\] is the infinite set of all monogenic trinomials $f(x)$ satisfying \eqref{Eq:first case} such that $\Gal(f)\simeq S_3\times S_3$. Let 
  \[\widehat{\FF_6}:=\{f(x)\in \FF_6: A>0 \ \mbox{and} \ B=2\}.\] Observe that since 
  \[A=18s+9=9(2s+1)>0,\] then $A\ge 9$. We claim next that $f_1(x), f_2(x)\in \widehat{\FF_6}$, where 
  \[f_1(x)=x^6+A_1x^3+2 \quad \mbox{and} \quad f_2(x)=x^6+A_2x^3+2\in \widehat{\FF_6},\] with $A_1\ne A_2$, are distinct. By way of contradiction, assume that $f_1(x)$ and $f_2(x)$ are not distinct. Then, by Corollary \ref{Cor:distinct}, we have that 
  $\Delta(f_1)=\Delta(f_2)$, or equivalently,
  \begin{equation}\label{Eq:F6}
  2^23^6(A_1+A_2)(A_1-A_2)(A_2^4+(A_1^2-24)A_2^2+A_1^2(A_1^2-24)+192)=0,
  \end{equation} since 
  \[\Delta(f_i)=2^23^6(A_i^2-8)^3.\] Since $A_i\ge 9$, it follows that 
  \[A_1+A_2>0 \quad \mbox{and} \quad A_2^4+(A_1^2-24)A_2^2+A_1^2(A_1^2-24)+192>0,\] which yields the contradiction that $A_1=A_2$ in \eqref{Eq:F6}. Thus, since there exist infinitely many positive integers $A$ such that $A^2-8$ is squarefree \cite{Nagel}, we have shown that $\FF_6$ contains infinitely many distinct monogenic trinomials. 
  
  The second case we consider is 
  \begin{align}\label{Eq:second case}
  \begin{split}
  2\mid A \quad &\mbox{and} \quad 3\nmid AB \quad \mbox{with}\\
   (A \mmod{4}, \ B \mmod{4})=(0,1) &
   \quad \mbox{and} \quad  (A \mmod{9}, \ B \mmod{9})=(1,1).
   \end{split}
  \end{align}  Then, by the Chinese Remainder Theorem, we have that 
  \[(A \mmod{36}, \ B \mmod{36})=(28,1).\] Hence, we can write 
  \[A=36s+28 \quad \mbox{and} \quad B=36t+1,\] where $s$ and $t$ are integer parameters. Thus,
  \begin{equation}\label{Eq:f Case 2} 
  f(x)=x^6+Ax^3+B=x^6+(36s+28)x^3+36t+1
  \end{equation} and
  \begin{align*}
     A^2-4B&=(36s+28)^2-4(36t+1)\\
     &=1296s^2+2016s-144t+780\\
     &=2^23(108s^2+168s-12t+65).
  \end{align*} Therefore, 
  \begin{align*}
    \Delta(f)&=3^6B^2(A^2-4B)^3\\
    &=3^6(36t+1)^2((36s+28)^2-4(36t+1))^3\\
    &=2^63^9(36t+1)^2(108s^2+168s-12t+65)^3
  \end{align*} 
  and 
  \begin{equation}\label{Eq:Gam case 2}
  \Gamma=108s^2+168s-12t+65.
   \end{equation} There exist infinitely many integers $t$ such that $36t+1$ is squarefree \cite{Prachar}, and for each such value of $t$, there exist infinitely many integers $s$ for which $\Gamma$ is squarefree \cite{Nagel}. Let $s$ and $t$ be such integers. Since $\Gamma$ is squarefree, then $A^2-4B$ is a square if and only if $\Gamma=3$, which is impossible since $\Gamma\equiv 2 \pmod{3}$ from \eqref{Eq:Gam case 2}. Thus, $g(x)=x^2+Ax+B$ is irreducible over $\Q$, so that $f(x)$ is irreducible over $\Q$ by Corollary \ref{Cor:m=3 HJMS}, and is therefore monogenic by construction. 
  
  If $-3\delta=-2^23^2\Gamma$ is a square, then $\Gamma=-1$ since $\Gamma$ is squarefree. Then, $\Gamma+1=0$ and $4\mid(\Gamma+1)$. However, we see from \eqref{Eq:Gam case 2} that $\Gamma+1\equiv 2 \pmod{4}$, and this contradiction establishes the fact that $-3\delta$ is not a square. 
  If $B=36t+1=c^3$ for some $c\in \Z$, then $36t+1=\pm 1$ since $B$ is squarefree. If $36t+1=-1$, then $t=-1/18$, which is impossible. However, if $36t+1=1$, then $t=0$, and we must exclude the value $t=0$ from the set of viable values of $t$. Finally, since $2\nmid B$, then $R(x)$ is irreducible in $\F_2[x]$, and therefore, $R(x)$ is irreducible over $\Q$. Hence, $\Gal(f)\simeq S_3\times S_3$ by Theorem \ref{Thm:HJCSextic1}. 
  
  Thus, with $f(x)$ from \eqref{Eq:f Case 2}, we have shown that 
  \[\FF_7:=\{f(x): t\ne 0; \ B \ \mbox{and} \ \Gamma \ \mbox{are squarefree}\}\] is the infinite set of all monogenic trinomials $f(x)$ satisfying \eqref{Eq:second case} such that $\Gal(f)\simeq S_3\times S_3$.
  Let 
  \[\widehat{\FF_7}:=\{f(x)\in \FF_7: A>0 \ \mbox{and} \ B=37\}.\] Observe that since 
  \[A=36s+28=4(9s+7)>0,\] then $A\ge 28$. We claim next that $f_1(x), f_2(x)\in \widehat{\FF_7}$, where 
  \[f_1(x)=x^6+A_1x^3+37 \quad \mbox{and} \quad f_2(x)=x^6+A_2x^3+37\in \widehat{\FF_7},\] with $A_1\ne A_2$, are distinct. By way of contradiction, assume that $f_1(x)$ and $f_2(x)$ are not distinct. Then, by Corollary \ref{Cor:distinct}, we have that 
  $\Delta(f_1)=\Delta(f_2)$, or equivalently,
  \begin{equation}\label{Eq:F7}
  3^637^2(A_1+A_2)(A_1-A_2)(A_2^4+(A_1^2-144)A_2^2+A_1^2(A_1^2-144)+65712)=0,
  \end{equation} since 
  \[\Delta(f_i)=3^637^2(A_i^2-148)^3.\] Since $A_i\ge 28$, it follows that 
  \[A_1+A_2>0 \quad \mbox{and} \quad A_2^4+(A_1^2-144)A_2^2+A_1^2(A_1^2-144)+65712>0,\] which yields the contradiction that $A_1=A_2$ in \eqref{Eq:F6}. Thus, since there exist infinitely many positive integers $A$ such that $A^2-148$ is squarefree \cite{Nagel}, we have shown that $\FF_7$ contains infinitely many distinct monogenic trinomials, and the proof of the theorem is complete. 
  \end{proof}

%\section*{Acknowledgments} %The author thanks the referee for the valuable suggestions.
%\section{Final Remarks}

% For alignments use AmS-LaTeX constructions not \eqnarray.

%% - theorems and proofs
%\begin{thm}[optional text]
% The optional material will be typeset as part of the theorem heading
%\end{thm}

%\begin{proof}[Optional proof heading]
% the proof
%\end{proof}
% An end-of-proof symbol (open box) will be typeset at the
% end of the proof.

%\ack % or \acks
% Put acknowledgements here
%The authors thank the anonymous referee for the suggestions that helped to improve the paper. %the helpful comments.

% alteratively, bibliographies prepared with BibTeX can be included by
% means of the following commands
%\bibliographystyle{srtnumbered}
%\bibliography{mybib}

\end{document}